\documentclass[]{amsart}
\usepackage{amsmath}
\usepackage{amssymb}
\usepackage{amsthm}
\usepackage{amsfonts}
\usepackage{amscd}
\usepackage[abbrev]{amsrefs}
\usepackage{bm} 
\usepackage[dvipdfmx]{graphicx}
\usepackage{enumerate}
\usepackage{url}
\usepackage{epsfig}
\usepackage{color}
\usepackage{float}
\usepackage{setspace}
\usepackage{comment}
\usepackage{appendix}
\usepackage[hang,small,bf]{caption}
\usepackage[subrefformat=parens]{subcaption}
\usepackage[bookmarks=true,%
    colorlinks=true,%
    linkcolor=blue,%
    citecolor=blue,%
    filecolor=blue,%
    menucolor=blue,%
    urlcolor=blue,%
    breaklinks=true]{hyperref}
\usepackage{slashed}    
\usepackage{verbatim}
\usepackage[normalem]{ulem}  
\usepackage{diagbox}

\theoremstyle{definition}
\newtheorem{theorem}{Theorem}[section]

\newtheorem{lemma}[theorem]{Lemma}
\newtheorem{corollary}[theorem]{Corollary}
\newtheorem{proposition}[theorem]{Proposition}

\newtheorem{question}{Question}
\newtheorem*{theorem*}{Theorem}
\newtheorem{thmintro}{Theorem}

\theoremstyle{remark}
\newtheorem{remark}{Remark}[section]

\numberwithin{equation}{section}
\numberwithin{figure}{section}

\makeatletter
\providecommand{\leftsquigarrow}{%
  \mathrel{\mathpalette\reflect@squig\relax}%
}
\newcommand{\reflect@squig}[2]{%
  \reflectbox{$\m@th#1\rightsquigarrow$}%
}
\makeatother

\begin{document}

\title{Combinatorial description of closed $3$-manifolds via ordered
  ideal triangulations}

\author{Stavros Garoufalidis}
\address{
  International Center for Mathematics, Department of Mathematics \\
  Southern University of Science and Technology \\
  Shenzhen, China \newline
  {\tt \url{http://people.mpim-bonn.mpg.de/stavros}}}
\email{stavros@mpim-bonn.mpg.de}
\author{Rinat Kashaev}
\address{Section de Math\'ematiques, Universit\'e de Gen\`eve \\
2-4 rue du Li\`evre, Case Postale 64, 1211 Gen\`eve 4, Switzerland \newline
         {\tt \url{http://www.unige.ch/math/folks/kashaev}}}
\email{Rinat.Kashaev@unige.ch}
\author{Sakie Suzuki}
\address{Department of Mathematical and Computing Science, School of Computing,
Institute of Science Tokyo,
2-12-1, Ookayama, Meguro-ku, Tokyo 152-8552, Japan \newline
{\tt \url{https://sakietotera.com}}}
\email{sakie@comp.isct.ac.jp}

\thanks{
  {\em Key words and phrases:}
 3-manifolds, triangulations, Pachner moves, ordered triangulations, normal o-graphs. 
}

\date{28 May 2026}

\begin{abstract}
It is well known that every compact oriented 3-manifold admits an ideal triangulation,
and that any two such triangulations with at least two ideal tetrahedra are related by
a sequence of Pachner $2$--$3$ moves.
Motivated by constructions in quantum topology, we give a combinatorial description
of closed $3$-manifolds in terms of ordered ideal triangulations and ordered
Pachner $2$--$3$ and $0$--$2$ moves.
\end{abstract}

\maketitle

{\footnotesize
\tableofcontents
}


\section{Introduction}
\label{sec.intro}

\subsection{Ideal triangulations and ordered ideal triangulations}

An ideal triangulation of a compact $3$-manifold $M$ with non-empty boundary
is a decomposition of $\operatorname{int}(M)$ into a collection of ideal tetrahedra
with their faces glued in pairs, where an ideal tetrahedron is obtained by removing
the four vertices of a tetrahedron.
Ideal triangulations are combinatorial objects that were introduced and used by
Thurston to describe the hyperbolic structure of knot complements~\cite{Thurston},
and have since become a fundamental tool in the study of the geometry, topology
and quantum topology of $3$-manifolds. 
Moreover, ideal triangulations have been implemented in software such as
SnapPy~\cite{snappy} and Regina~\cite{regina},
providing effective tools for the classification of $3$-manifolds
and allowing for experimentation and visualization of phenomena
in $3$-dimensional topology.

A fundamental problem in this approach is to determine which local moves
relate ideal triangulations representing the same $3$-manifold.
In particular, it is known that any two ideal triangulations of the same
$3$-manifold with at least two ideal tetrahedra are related by a sequence of
Pachner $2$--$3$ moves and their inverses~\cite{Mat}; see Figure~\ref{fig:Pachner}.

\begin{figure}[H]
    \centering
    \includegraphics{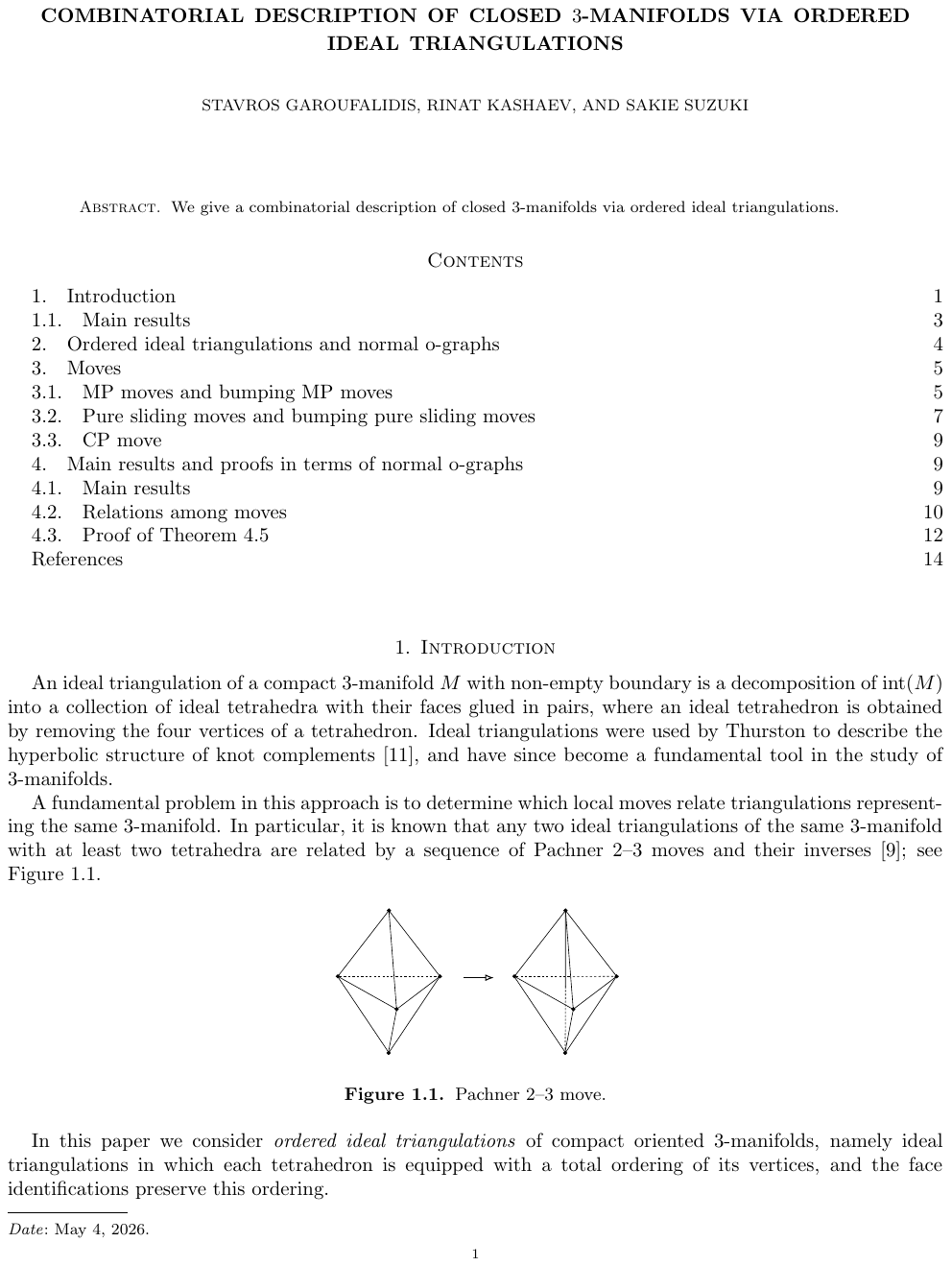}
    \caption{Pachner $2$--$3$ move. }    
    \label{fig:Pachner}
\end{figure}

In this paper, we consider \emph{ordered ideal triangulations}
of compact, oriented $3$-manifolds,
that is, ideal triangulations in which each tetrahedron
is equipped with a total ordering of its vertices
and the face identifications preserve this ordering.

An ordered tetrahedron has two possible types, as shown in Figure~\ref{fig:ordered_tetrahedron}, according to whether the vertex ordering is compatible
with the orientation of the $3$-manifold.

\begin{figure}[H]
    \centering
  \includegraphics{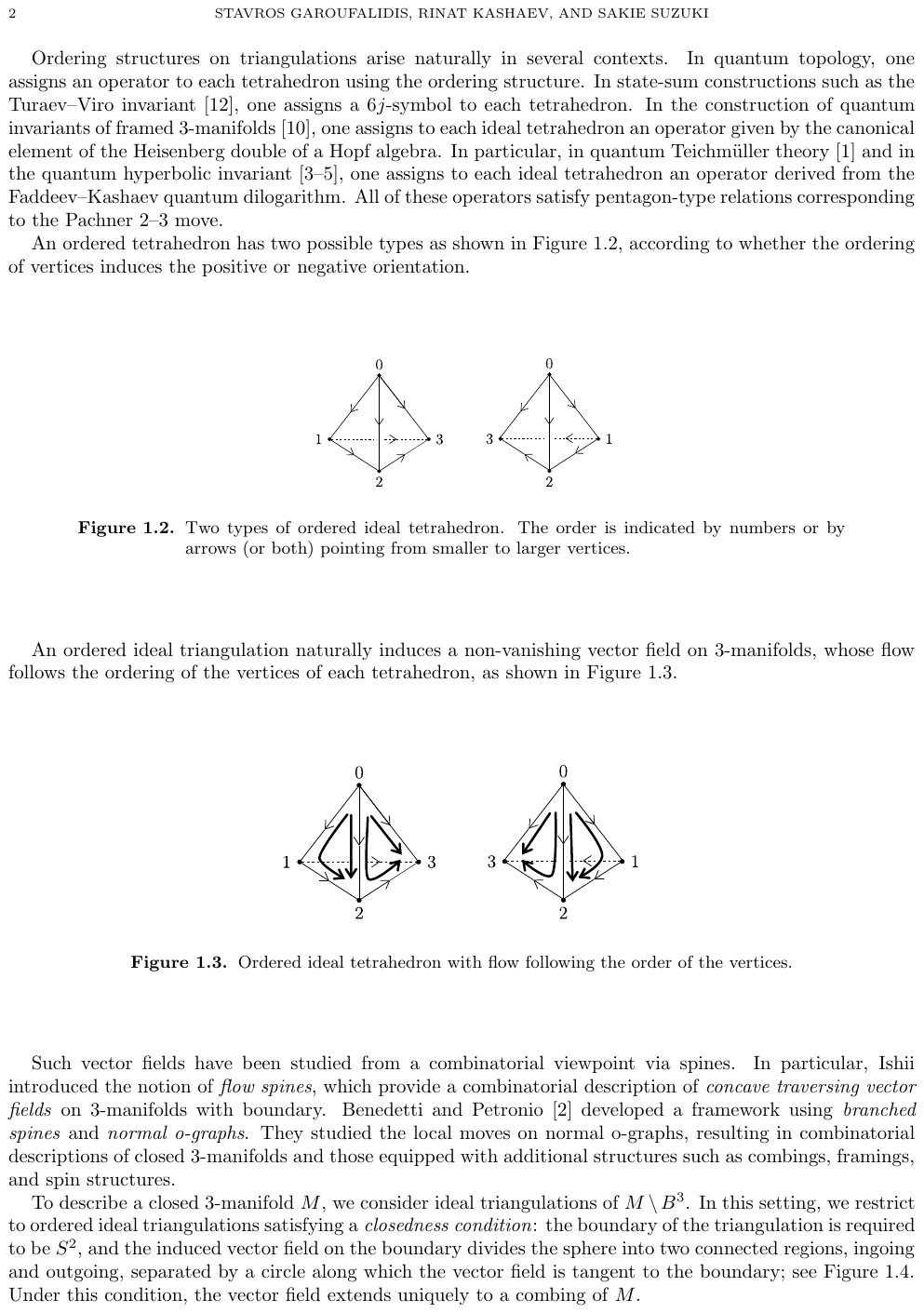}
  \caption{Two types of ordered ideal tetrahedra. The order is indicated
    by numbers on the vertices or by arrows (or both) pointing from smaller
    to larger vertices.}
    \label{fig:ordered_tetrahedron}
\end{figure}

Ordered triangulations arise naturally in several contexts.
In quantum topology, one assigns an operator to each tetrahedron using the ordering
structure.
For example, in the state-sum construction of the Turaev--Viro invariant~\cite{TV},
one assigns a $6j$-symbol to each tetrahedron, where the $6j$-symbol is obtained
from an operator associated with an ordered tetrahedron via a symmetrization procedure
(cf.~\cite{BS2}).
In the construction of quantum invariants of framed $3$-manifolds~\cite{MST1, MST2},
one assigns to each ideal tetrahedron an operator given by the canonical element
of the Heisenberg double of a Hopf algebra.
In particular, in quantum Teichm\"uller theory~\cite{AK}
and in the quantum hyperbolic invariant~\cite{BS0,BS1,BS2},
one assigns to each ideal tetrahedron an operator derived from the Faddeev--Kashaev
quantum dilogarithm,
where the quantum dilogarithm arises from the canonical element of the Heisenberg
double of the quantum Borel subalgebra $\mathrm{U}_q(\mathfrak{sl}_2^+)$.
All of these operators satisfy pentagon-type relations
corresponding to the Pachner $2$--$3$ move, just as the Yang--Baxter equation
corresponds to the Reidemeister III move in quantum link invariants.

\subsection{Ordered ideal triangulations for closed $3$-manifolds}

An ordered ideal triangulation determines not only a 3-manifold, but also a
nowhere-vanishing vector field on it whose flow follows the ordering of the vertices
of each tetrahedron, as shown in Figure~\ref{fig:flow}.

\begin{figure}[H]
    \centering
   \includegraphics{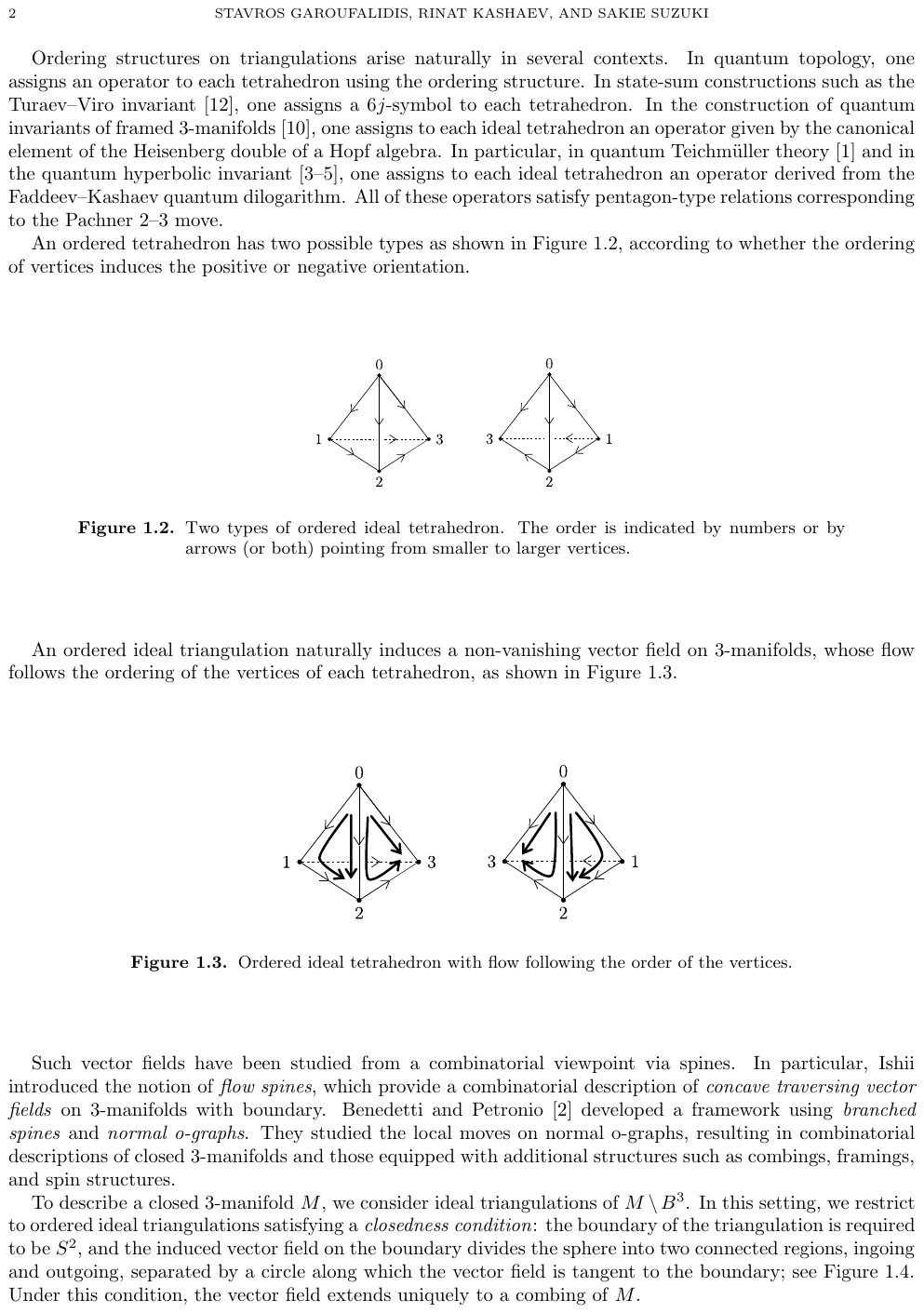}
   \caption{Ordered ideal tetrahedra with flows following the vertex ordering.}
    \label{fig:flow}
\end{figure}

An ideal triangulation on $M$ determines a triangulation of its boundary $\partial M$
by truncating the tetrahedra at their vertices. An ordered triangulation determines
a collection of arcs at these triangles, namely the tangency locus of the
above-mentioned vector field (shown in thick lines in
Figure~\ref{fig:closedness_condition}). The arcs join to a collection of multicurves
on $\partial M$, which we call informally a \emph{sutured structure} on
$\partial M$.

In what follows, we assume that $3$-manifolds are connected.
To describe a closed $3$-manifold $M$, we consider ideal triangulations of
$M \setminus \operatorname{int}(B^3)$.  \footnote{Such ideal triangulations
  are also known as $1$-vertex triangulations of $M$.}
In this setting, we restrict to ordered ideal triangulations with a standard
sutured $S^2$ boundary as shown in Figure~\ref{fig:closedness_condition}.
Under this condition, the nowhere-vanishing vector field extends uniquely from
$M \setminus \operatorname{int}(B^3)$ to $M$.

\begin{figure}[H]
    \centering
   \includegraphics{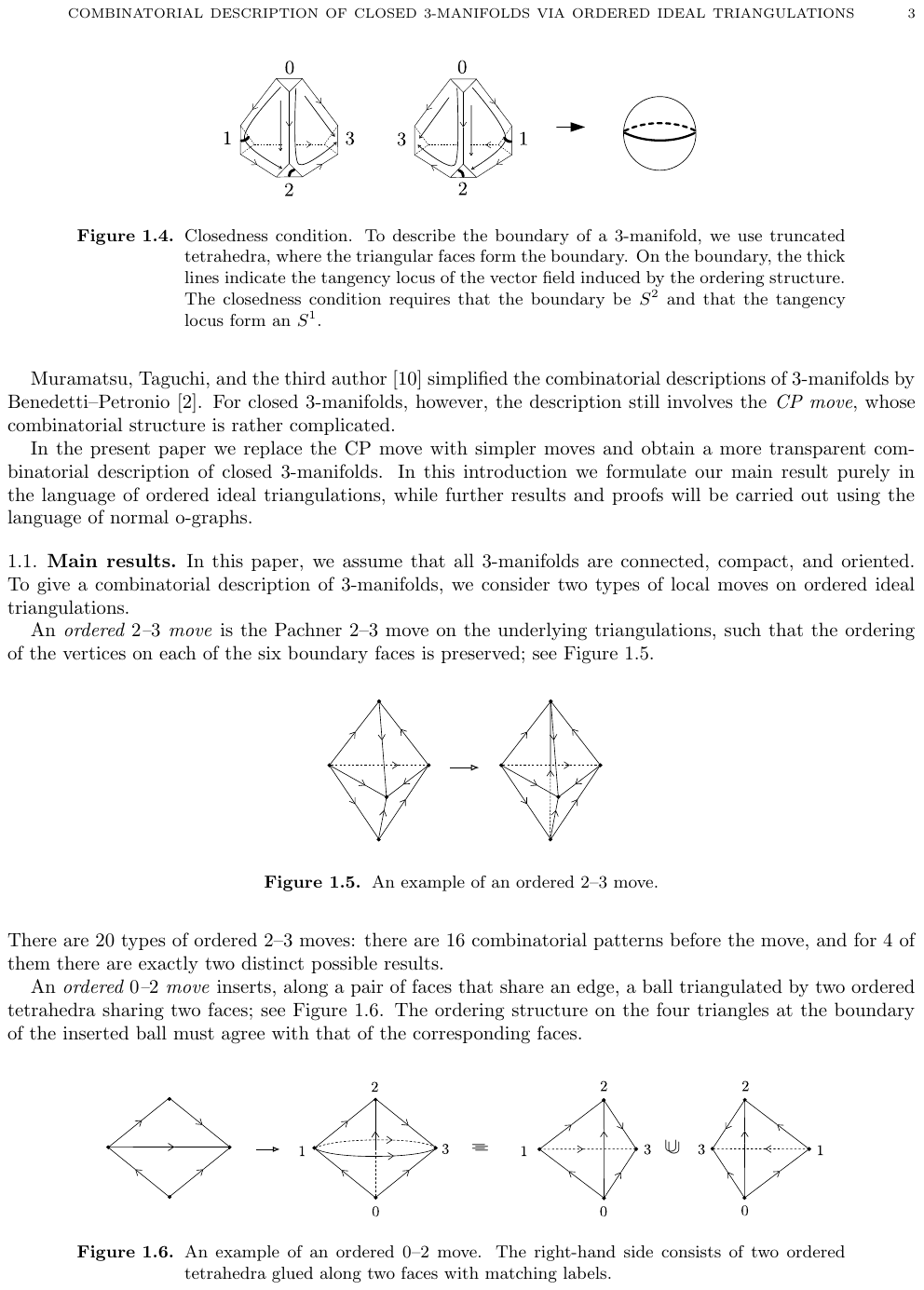}
   \caption{The standard sutured structure on $S^2$: the multicurve is a single
     circle bounding two disks in $S^2$.}
    \label{fig:closedness_condition}
\end{figure}


Such vector fields on closed $3$-manifolds have been studied from a combinatorial
viewpoint via spines. In particular, Ishii~\cite{Ishii2} gave a combinatorial
description of closed $3$-manifolds using \emph{flow spines}, which are spines
arising from non-singular flows on closed $3$-manifolds. 
There is a natural bijection between standard flow spines, where standard means
special in the sense of \cite{Mat}, and ordered ideal triangulations with the
standard sutured $S^2$ boundary. Benedetti and
Petronio~\cite{BP} also gave combinatorial descriptions of \emph{combed} $3$-manifolds
and closed $3$-manifolds using \emph{closed normal o-graphs}, which provide graphical
presentations of ordered ideal triangulations with the standard sutured $S^2$ boundary.

Muramatsu, Taguchi, and the third author~\cite{MST} simplified the combinatorial
descriptions of closed $3$-manifolds by Benedetti--Petronio \cite{BP}. However,
the description still involves the \emph{combinatorial Pontryagin move} (in short, \emph{CP move}),
whose combinatorial structure is rather complicated.

In the present paper we replace the CP move with simpler local moves and obtain a
local combinatorial description of closed $3$-manifolds. In this
introduction we formulate our main result in the language of ordered ideal
triangulations, while further results and proofs will be carried out using the
language of normal o-graphs.

\subsection{Moves for ordered ideal triangulations}

To give a combinatorial description of $3$-manifolds, we consider two types
of local moves on ordered ideal triangulations.

An \emph{ordered $2$--$3$ move} is the Pachner $2$--$3$ move on the underlying
triangulations such that the ordering of the vertices on each of the six boundary
faces is preserved. See Figure~\ref{fig:ordered_23} for the standard ordered
$2$--$3$ move.
\begin{figure}[H]
    \centering
    \includegraphics{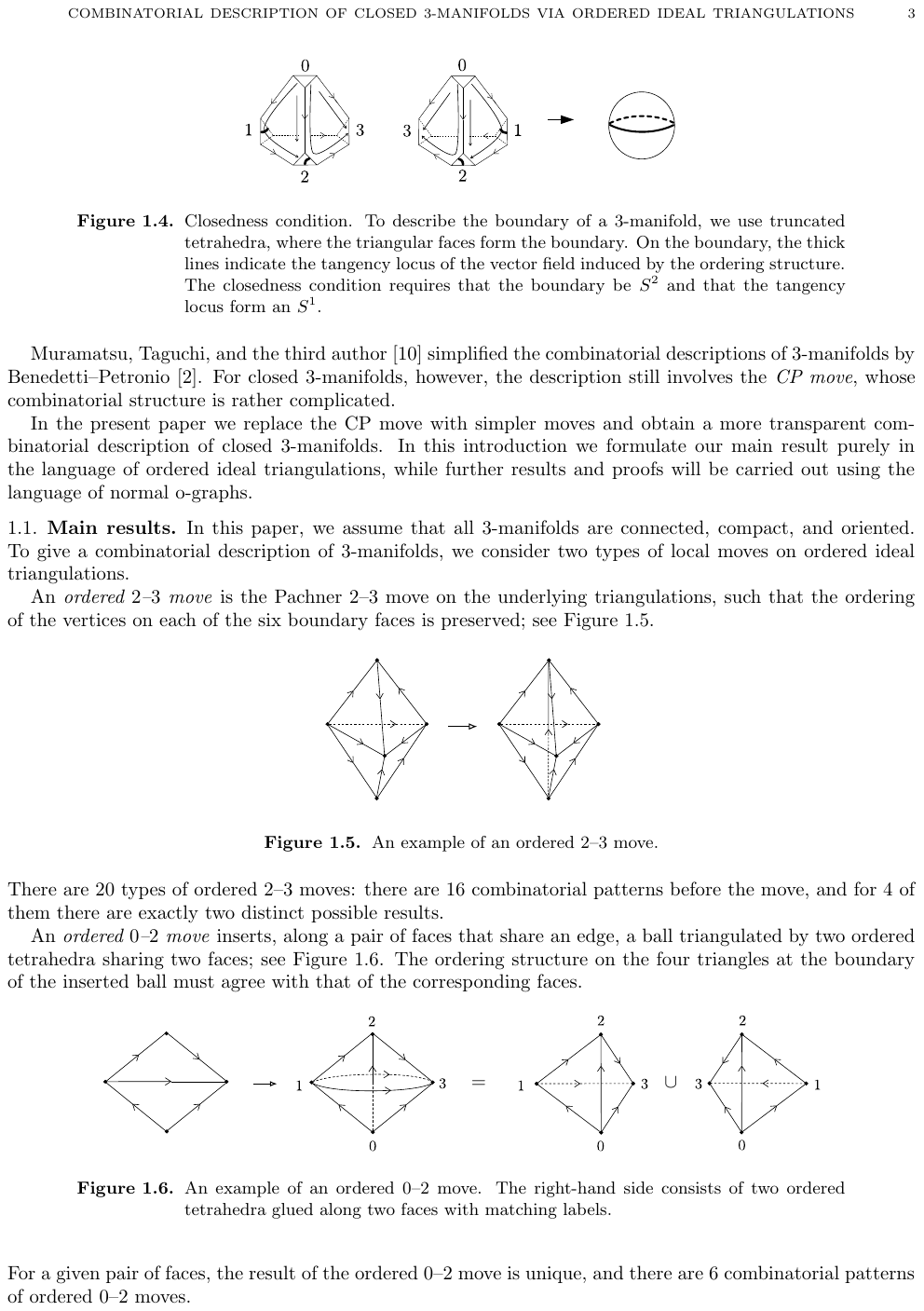}
    \caption{The standard ordered $2$--$3$ move.}
    \label{fig:ordered_23}
\end{figure}

\noindent There are $20$ types of ordered $2$--$3$ moves. There are $16$
combinatorial patterns before the move, and for $4$ of them there are exactly
two possible orientations of the common edge of the $3$ tetrahedra.  

An \emph{ordered $0$--$2$ move} inserts, along a pair of  faces that share an edge,
a ball triangulated by two ordered tetrahedra sharing two faces; see
Figure~\ref{fig:ordered_02}. The ordering structure on the four triangles at the
boundary of the inserted ball must agree with that of the corresponding faces.

\begin{figure}[H]
    \centering
   \includegraphics{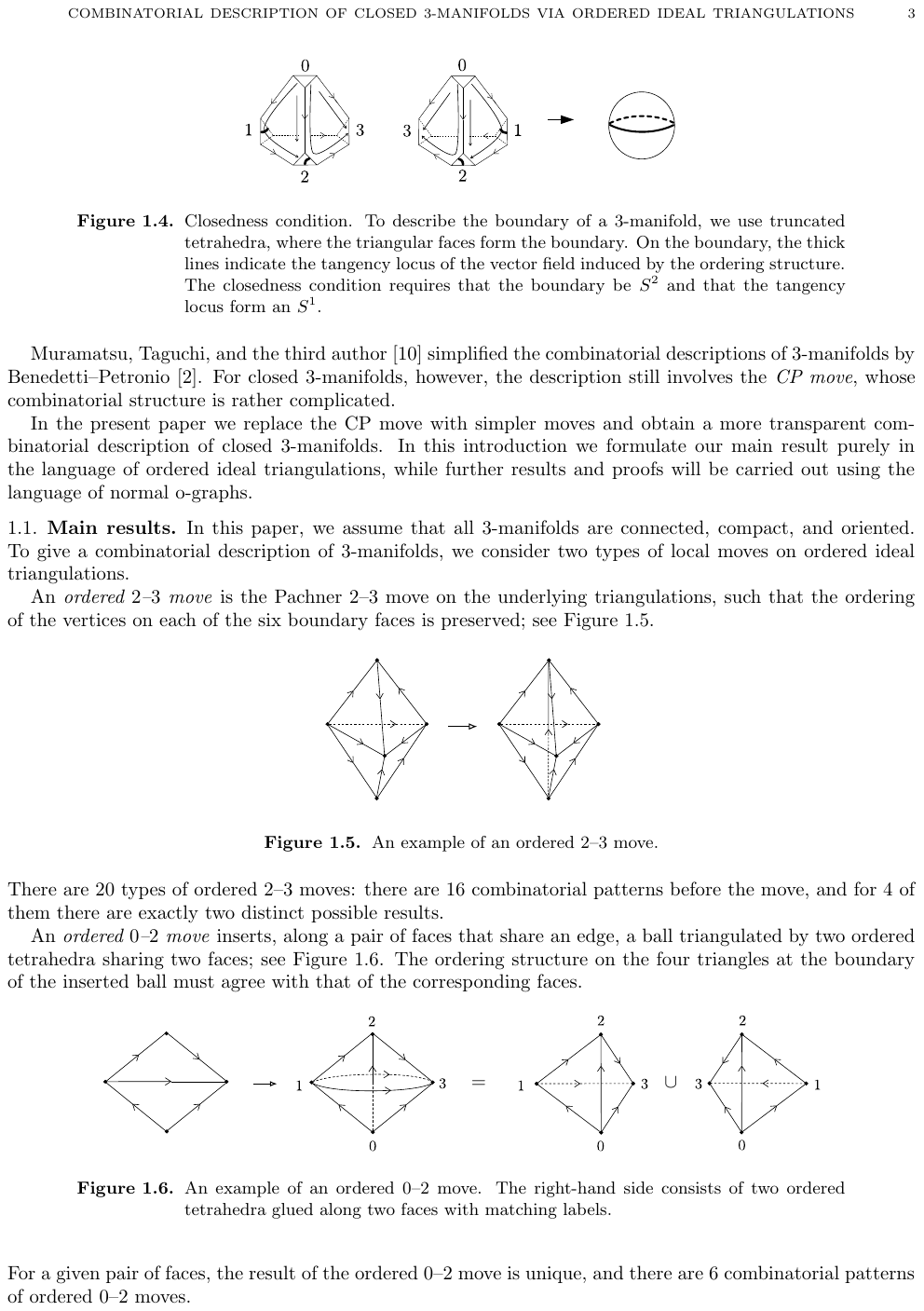}
   \caption{An ordered $0$--$2$ move. The right-hand side
     consists of two ordered tetrahedra glued along two faces $012$ and $023$.}
    \label{fig:ordered_02}
\end{figure}

\noindent For a given pair of faces, the result of the ordered $0$--$2$ move is
unique, and there are $6$ combinatorial patterns, see Figure~\ref{fig:ordered_02_all}.

\begin{figure}[H]
    \centering
   \includegraphics[scale=0.5]{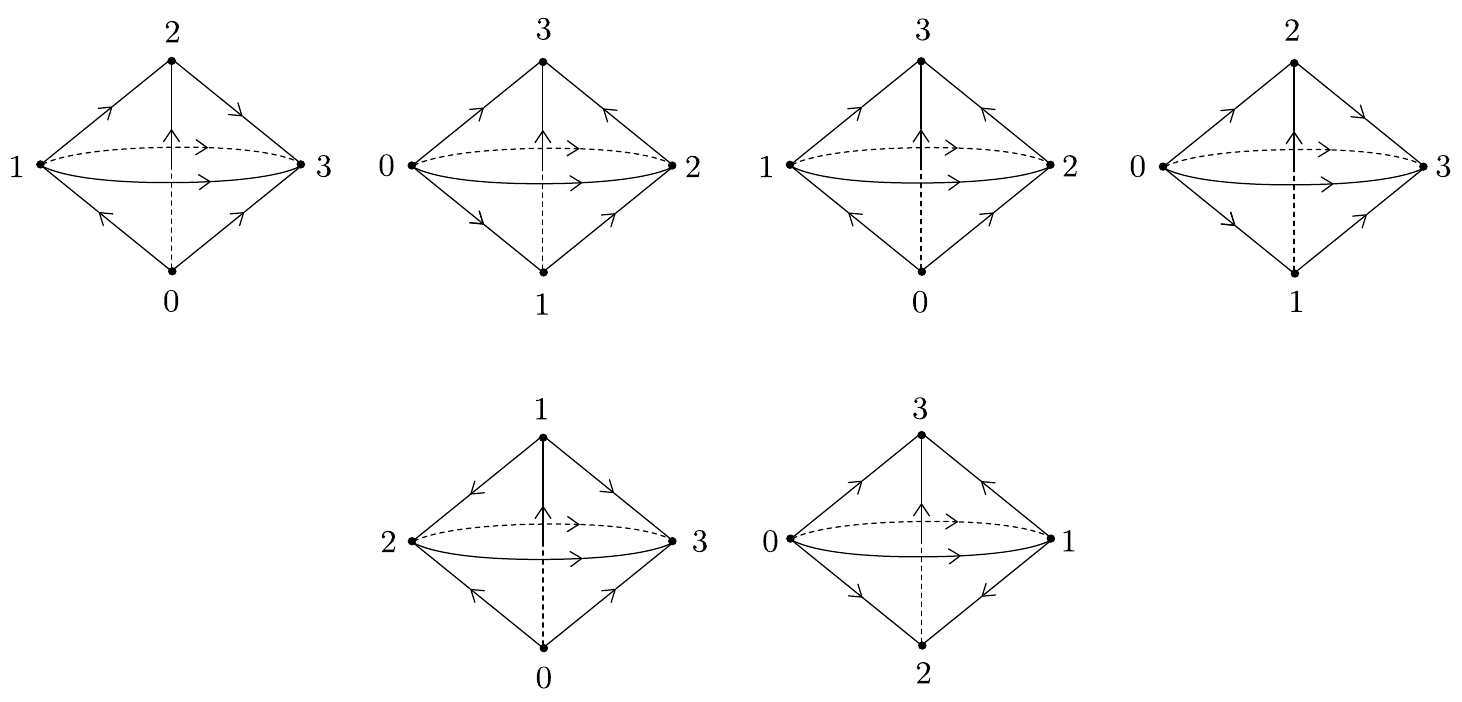}
   \caption{All $6$ combinatorial types of ordered $0$--$2$ moves.}

    \label{fig:ordered_02_all}
\end{figure}

\subsection{Main results}
\label{sub.results}

We arbitrarily choose one ordered $2$--$3$ move and call it the preferred move.  
The following theorem generalizes the result of~\cite{MST}.

\begin{thmintro}[Theorem \ref{MST-thm} and Corollary \ref{bMP}]
\label{main0}
Each ordered $2$--$3$ move can be realized as a sequence
consisting of a single preferred ordered $2$--$3$ move (or its inverse)
together with ordered $0$--$2$ moves and their inverses.
\end{thmintro}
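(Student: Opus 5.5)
The plan is to reduce everything to one basic mechanism: conjugating a $2$--$3$ move by $0$--$2$ moves changes the ordering data of the tetrahedra taking part in the move. I would work in the dual language of normal o-graphs, following \cite{BP} and \cite{MST}. There an ordered $2$--$3$ move is a local move on a $4$-valent graph with a transverse orientation and an edge ordering at each vertex. The $20$ types are encoded by the combinatorial data at the two vertices before the move, namely their signs and the induced orientations of the edges, together with the orientation of the new central edge in the $4$ ambiguous cases. An ordered $0$--$2$ move creates a bigon with two vertices, of one of the $6$ types in Figure~\ref{fig:ordered_02_all}.

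\textbf{Step 1 (changing one edge orientation near the move).} Suppose a $2$--$3$ move of type $A$ is to be performed on two tetrahedra, and we want to reverse the ordering along one boundary face of the configuration. I would first apply an ordered $0$--$2$ move on that face, inserting a pillow of two tetrahedra. This replaces the face by a face with the opposite local ordering on the side facing the configuration. Then I would perform a $2$--$3$ move of some other type $B$ on the modified configuration. Finally I would perform further $2$--$3$ moves, or inverse $0$--$2$ moves, to remove the pillow on the other side. The aim is to record, in a finite ``move graph'' on the $20$ types, the relations $A \sim B$ realized by such conjugations, each using exactly one $2$--$3$ move. This is the key point: the conjugating moves must be $0$--$2$ moves only, so the number of $2$--$3$ moves never increases.

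\textbf{Step 2 (connectivity).} I would show that the move graph is connected, so that every type is linked to the preferred type by a chain of relations. Composing along the chain, each link replaces one $2$--$3$ move by one $2$--$3$ move conjugated by $0$--$2$ moves. The total therefore still involves a single preferred $2$--$3$ move or its inverse. Inverses enter naturally because reading a $3$--$2$ move backwards exchanges roles, which halves the number of types to check. I would organise the check using the symmetries that preserve the set of moves: reversal of all orderings, mirror image (orientation reversal), and inversion of the move. These reduce the $20$ types to a handful of orbit representatives. Since the preferred move is arbitrary, connectivity has to hold for the full graph, not just within one orbit. This is precisely the generalization of \cite{MST}, where one specific move was preferred.

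\textbf{Main obstacle.} The hardest part is Step 1 at the level of explicit pictures. After inserting the pillow, the two-tetrahedron configuration on which we want to perform the $2$--$3$ move is no longer the original one. One must check that the pillow can be absorbed, that is, that after the $2$--$3$ move an inverse $0$--$2$ move becomes available, possibly only after a sliding. A naive absorption might need a second $2$--$3$ move, and this must be avoided. My first attempt would be to place the pillow on the face that the move leaves unchanged and is adjacent to the new central edge. The $2$--$3$ move then transports the bigon to a position where it again bounds a pillow between two of the three new tetrahedra, and its inverse $0$--$2$ move flips the orientation of the central edge. This handles in particular the $4$ ambiguous cases, relating the two central-edge orientations. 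The remaining links would come from pillows on the other faces, checked case by case through the orientation data, possibly by computer enumeration of the $20\times 6$ combinations.
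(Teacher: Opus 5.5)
\textbf{Gap in Step 2.} The move graph you describe can never be connected, so Step~2 fails as designed.

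Let $n_+-n_-$ be the number of type $+$ tetrahedra minus the number of type $-$ tetrahedra. Up to sign, this is the sum of the entries of $\gamma$ in the E-data. Every ordered $0$--$2$ move, bumping or not, leaves it unchanged. The reason is that the two tetrahedra of the inserted ball are glued along a face by the identity on labels, so they have opposite types; in o-graph terms, each sliding move creates one positive and one negative crossing.

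Every ordered $2$--$3$ move changes $n_+-n_-$ by exactly $\pm1$. The five vertices carry a global order. If the tetrahedron missing vertex $k$ has type $\varepsilon(-1)^k$ on the left-hand side, then it has type $-\varepsilon(-1)^k$ on the right-hand side. Hence the change is $-\varepsilon\sum_{k=0}^{4}(-1)^k=-\varepsilon$, and ten of the twenty types have each sign. You can see this in the E-data of Section~\ref{Proof}, where C3 lowers the sign sum and C1 raises it.

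Consequently, a relation $A=c\,B\,c'$, with $c,c'$ made of $0$--$2$ moves and $B$ a single $2$--$3$ move, forces $A$ and $B$ to change $n_+-n_-$ in the same direction. Your graph therefore has at least two components.

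Your treatment of the four ambiguous cases fails for the same reason. There the two apexes are adjacent in the global order, and swapping them flips $\varepsilon$. So the two orientations of the central edge give right-hand sides with one, respectively two, tetrahedra of type $+$, and they cannot be conjugate by $0$--$2$ moves.

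There is also a smaller inaccuracy in Step~1: an ordered $0$--$2$ move never reverses the ordering of a face. The boundary triangles of the inserted ball must carry the orderings of the faces they replace.

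\textbf{Missing idea.} What you need is the relation of the other parity: a $2$--$3$ move factors as a $0$--$2$ move followed by a $3$--$2$ move of another type. For example, A1 is ps-I followed by $\mathrm{D2}^{-1}$ (Figure~\ref{D2toA2}). This links a move to the \emph{inverse} of another move.

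The paper arranges these relations in two diagrams (Figures~\ref{Relations among MP moves} and~\ref{Relations among bMP moves}), which are exactly the two classes of the invariant above.
\begin{itemize}
\item Each type appears in one diagram, and its inverse appears in the other.
\item Reading a decomposition backwards supplies the reverse arrows, so each diagram is strongly connected.
\item The preferred move lies in one diagram and its inverse in the other.
\end{itemize}
This is why the statement must say ``or its inverse''.

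Beyond this architecture, the actual content of the proof is the explicit decompositions. For MP moves these come from \cite{MST}; for bumping moves they are Lemma~\ref{Lem1} and Figures~\ref{EfromB}--\ref{AfromF}. Your proposal defers these to an unspecified case check, so even with the graph corrected you would still have to produce them.
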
 
The main result of this paper is the following.

\begin{thmintro}[Theorem \ref{P}]\label{main}
The equivalence classes of ordered ideal triangulations with the standard
sutured $S^2$ boundary, up to a single preferred ordered
$2$--$3$ move and ordered $0$--$2$ moves, are in one-to-one correspondence with
the orientation-preserving homeomorphism classes of closed $3$-manifolds.
\end{thmintro}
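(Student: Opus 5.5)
The plan is to reduce Theorem~\ref{main} to the known combinatorial presentation of closed $3$-manifolds by closed normal o-graphs due to Benedetti--Petronio~\cite{BP}, in the simplified form of Muramatsu--Taguchi--Suzuki~\cite{MST}. Their result says that closed normal o-graphs, modulo the full set of ordered $2$--$3$ moves (all $20$ types), ordered $0$--$2$ moves and the combinatorial Pontryagin (CP) move, correspond bijectively to closed oriented $3$-manifolds. Closed normal o-graphs are the same data as ordered ideal triangulations with the standard sutured $S^2$ boundary, so the argument stays in that language. It then splits into two tasks.

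\emph{First task: construct the map and reduce the moves.} The map sends an ordered ideal triangulation of $M\setminus\operatorname{int}(B^3)$ with standard sutured boundary to the closed manifold $M$, obtained by capping off the boundary sphere. This map is surjective, since every closed $M$ admits a standard flow spine~\cite{Ishii2} or a closed normal o-graph~\cite{BP}. It is also invariant under all the moves: each ordered $2$--$3$ and $0$--$2$ move is a local retriangulation of a ball, so it does not change the manifold, and it does not change the sutured boundary either. By Theorem~\ref{main0}, every ordered $2$--$3$ move is a composite of the preferred move, its inverse, and ordered $0$--$2$ moves. So on the move side, the only remaining step is to eliminate the CP move.

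\emph{Second task, the main obstacle: show that the CP move follows from the preferred $2$--$3$ move and $0$--$2$ moves.} The CP move changes the combing, i.e.\ the homotopy class of the vector field, by the Pontryagin generator, while keeping the manifold fixed. Local $2$--$3$ and $0$--$2$ moves preserve the homotopy class of the vector field away from a ball. So the CP move cannot simply be rewritten as a sequence of these moves in a region where the field is untouched.

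What I would try first is to find an explicit sequence of ordered $0$--$2$ and $2$--$3$ moves that modifies the triangulation near the boundary sphere $\partial B^3$. The point is that the vector field on the ball is determined only up to the extension across $B^3$, and the Pontryagin twist can be absorbed there. Concretely, I would look for a sequence that starts and ends with the standard sutured $S^2$ boundary but whose net effect on the combing is one full twist, matching the CP move. This would be checked by comparing the two o-graph pictures via a finite diagrammatic computation, using Theorem~\ref{main0} freely so that arbitrary ordered $2$--$3$ moves are available.

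If a direct sequence resists, the fallback is to decompose the CP move into the sub-moves used in~\cite{MST}. Each sub-move differs from a known local move only by something supported near the boundary, and that discrepancy would be handled by sliding it around the spherical boundary with $0$--$2$ moves.

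Once the CP move is expressed in this way, the bijection follows: two triangulations with homeomorphic closed manifolds are related by the moves of~\cite{MST}, and each of these moves is now generated by the preferred $2$--$3$ move and ordered $0$--$2$ moves.
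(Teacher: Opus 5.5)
\textbf{Gap: the CP move is never actually eliminated.}

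Your overall architecture matches the paper's: use Corollary~\ref{MST-cor}, reduce all $2$--$3$ moves to the preferred one by Theorem~\ref{main0}, and be left with the CP move. The capped-off manifold is indeed invariant, because every move preserves the underlying manifold with boundary. However, your stated reason, that no ordered move changes the sutured boundary, is false. The bumping MP moves (types E and F) and the bumping pure sliding moves bps-1 and bps-2 (bottom row of Figure~\ref{fig:ordered_02_all}) do change it. The equivalence relation in the theorem is the restriction to standard-boundary triangulations of the relation on \emph{all} ordered ideal triangulations, so intermediate steps may leave the class of closed normal o-graphs. (Also, the presentations of~\cite{BP} and~\cite{MST} use the $16$ MP moves, not all $20$.) This misconception is exactly what blocks your second task.

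The genuine gap is that the second task, which is the whole new content (Proposition~\ref{P0}), is not carried out. You describe a search, not a sequence, and the heuristics guiding it fail.
\begin{itemize}
\item A sequence in which every move preserves the sutured structure consists only of MP and pure sliding moves. These preserve the combing, while the CP move is precisely what relates different combings of the same manifold. So no sequence staying among closed normal o-graphs can realize CP.
\item There is no freedom to ``absorb the twist'' in the extension across $B^3$: for the standard sutured boundary the extension of the field is unique.
\item Exhibiting some sequence with the same ``net effect on the combing'' would not by itself connect the two specific sides of the CP move.
\end{itemize}
Your instinct to act near $\partial B^3$ points the right way. It has to be made precise as deliberately passing through non-standard sutured boundaries via bumping moves, with the sequence written out explicitly.

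The paper does this in E-data.
\begin{itemize}
\item The left side $[[a,-1,2,-2,3,-3,1,b];[1,1,1]]$ is reduced in $14$ steps: C3, C3, ps-I$^{-1}$, C1, ps-III$^{-1}$, C3, E2, D1, B2, bps-2$^{-1}$, E1, ps-IV$^{-1}$, C1, bps-2$^{-1}$.
\item The right side is reduced in $6$ steps: F1, ps-III$^{-1}$, A1, bps-1$^{-1}$, C1, bps-2$^{-1}$.
\item Both end at the common graph $[[a,-1,2,-2,1,b];[-1,1]]$.
\end{itemize}
Only then do Theorem~\ref{MST-thm} and Corollary~\ref{bMP} replace every MP and bumping MP move used by the preferred one. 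Without such a concrete sequence, or some substitute argument that bumping moves realize the CP change, your proposal does not establish injectivity.
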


We remark that the two ordered $0$--$2$ moves in the bottom row of
Figure~\ref{fig:ordered_02_all} change the sutured structure on the boundary
(cf. \cite[Figure 3.23]{BP}). The equivalence relation in Theorem~\ref{main} 
is the restriction of the equivalence relation on all ordered ideal triangulations
to those with the standard sutured $S^2$ boundary.

Theorem~\ref{main} refines~\cite[Theorem 3.1]{C}, and gives an affirmative answer
to a restricted version of~\cite[Question 9.1.4]{BP} for closed normal o-graphs,
see Section~\ref{O-graph} for details. 

A natural generalization of Theorem~\ref{main}  to compact $3$-manifolds with torus
boundary is an important problem.

\begin{question}
Consider ordered ideal triangulations with torus boundary, up to ordered $2$--$3$
moves and ordered $0$--$2$ moves. Do their equivalence classes correspond
bijectively to the orientation-preserving homeomorphism classes of compact $3$-manifolds with torus boundary?
\end{question}

This question is a reformulation of~\cite[Question 9.1.3]{BP}
in terms of ordered ideal triangulations, in the torus boundary case.
\subsection{Organization of the paper}

The paper is organized as follows. In Section~\ref{Section;Normal o-graphs}, we
explain a diagrammatic description of ordered ideal triangulations using normal
o-graphs. In Section~\ref{moves}, we describe the moves on ordered ideal
triangulations in terms of normal o-graphs.
In Section~\ref{O-graph}, we state the main result (Theorem~\ref{P}) in terms of normal o-graphs. 
Section~\ref{rel} is devoted to a detailed study of the precise relations among
moves of normal o-graphs.
In Section~\ref{Proof}, we prove Proposition~\ref{P0},
which completes the proof of Theorem~\ref{P}
(equivalently, Theorem~\ref{main} in the introduction).

\subsection*{Acknowledgments}

We would like to thank the organizers of the workshop on ``Low-dimensional Topology
and Number Theory'', held at Oberwolfach in April 2026 for providing a stimulating
environment.
The work of RK is partially supported by the SNSF research program NCCR The
Mathematics of Physics (SwissMAP), the SNSF grants no.~200021-232258, no.~200020-200400,
and no.~10009199.
The work of SS was partially supported by JSPS KAKENHI Grant Number JP24K06736.


\section{Ordered ideal triangulations and normal o-graphs}
\label{Section;Normal o-graphs}

A normal o-graph encodes a branched spine, which is dual to an ordered ideal
triangulation.
In this section we describe ordered ideal triangulations directly in terms of
normal o-graphs, without referring to  branched spines.
For the theory of branched spines, see \cite{BP}.

A \textit{normal o-graph} is a finite connected $4$-valent graph $\Gamma$
with at least one vertex,
equipped with the following additional structures:
\begin{itemize}
\item[(1)] Each vertex of $\Gamma$ is endowed with an embedding of a neighborhood
of the vertex into $\mathbb{R}^2$, viewed up to planar isotopy.
In addition, one pair of opposite edge germs is marked as passing over
the other pair, as in a link diagram.

\item[(2)] Each edge is oriented, and the orientations of opposite edges
agree at each vertex.
\end{itemize}

Such a graph admits a planar diagram in $\mathbb{R}^2$
that realizes the prescribed local planar structures at the vertices.
In general, virtual crossings are introduced as artifacts of the planar representation.
There are two types of classical crossings, positive and negative, as for link
diagrams. Two such planar diagrams represent the same normal o-graph
if they are related by planar isotopy and the Reidemeister-type moves shown
in Figure~\ref{fig:RM}.

\begin{figure}[H]
    \centering
    
    \includegraphics{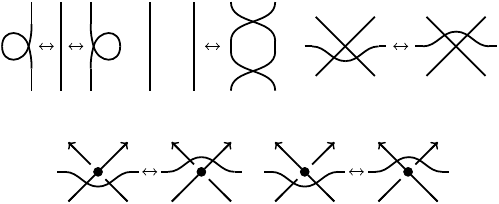}
\caption{Reidemeister-type moves.}
    \label{fig:RM}
\end{figure}

There is a one-to-one correspondence between normal o-graphs
and ordered ideal triangulations, defined as follows.

Let $ \Gamma $ be a normal o-graph.
For each positive (resp. negative) crossing of $ \Gamma $,
we associate an ordered tetrahedron of type $-$
(resp. type $+$) as shown in Figure~\ref{fig:o-graph_vs_triangulaion}, 
so that the four edges of the crossing correspond to the four faces
of the ordered tetrahedron.
We then obtain an ordered ideal triangulation by gluing
the faces of these tetrahedra along the edges of $ \Gamma $.
This gluing is uniquely determined since it is required to preserve
the ordering of vertices.

\begin{figure}[H]
\centering
\includegraphics[scale=0.8]{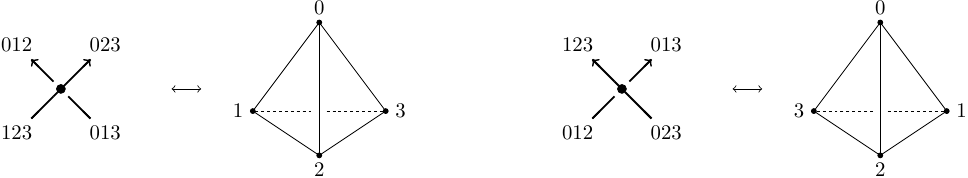}
\caption{Correspondence between crossings of a normal o-graph and ordered tetrahedra.
  A positive crossing  corresponds to an ordered tetrahedron of type $-$ (left),
  and a negative crossing  corresponds to an ordered tetrahedron of type $+$ (right).
  The three numbers at each endpoint of the crossing indicate the face of the
  corresponding ordered tetrahedron. The tetrahedra are drawn in $\mathbb{R}^3$.}
\label{fig:o-graph_vs_triangulaion}
\end{figure}

The above construction is reversible; an ordered ideal triangulation
uniquely determines a normal o-graph.

A \textit{closed normal o-graph} is a normal o-graph representing
an ordered ideal triangulation of a closed $3$-manifold $M$ with a ball removed,
such that the associated vector field divides the spherical boundary
into two hemispheres with ingoing and outgoing vectors.
This closedness condition can also be described purely combinatorially
in terms of normal o-graphs; see~\cite{BP}.


\section{Moves on normal o-graphs}
\label{moves}

Recall that there are two types of moves on ordered ideal triangulations:
the ordered \(2\)--\(3\) moves and the ordered \(0\)--\(2\) moves. We now recall
the corresponding moves on normal o-graphs. If the induced move on ordered ideal
triangulations preserves the sutured structure on the boundary, these moves are
called \emph{Matveev--Piergallini (MP) moves} and \emph{pure sliding moves}.
Otherwise, they are called \emph{bumping MP moves} and \emph{bumping pure
sliding moves}.

\subsection{MP moves and bumping MP moves}
\label{23moves}
We translate the $20$ types of ordered $2$--$3$ moves
on ideal triangulations in terms of normal o-graphs.

The \textit{MP moves} on normal o-graphs are shown in Figures~\ref{The MP moves of type A}--\ref{The MP moves of type D}.

\begin{figure}[H]
    \centering
    \includegraphics[scale=0.8]{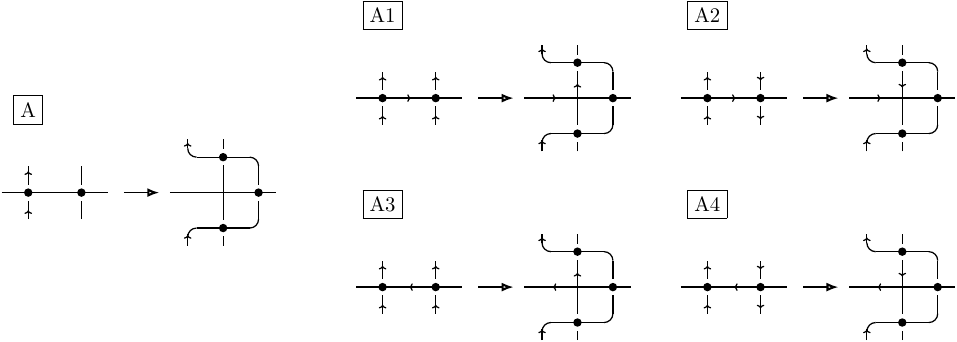}
    \caption{MP moves of type A.}
    \label{The MP moves of type A}
\end{figure}

\begin{figure}[H]
    \centering
    \includegraphics[scale=0.8]{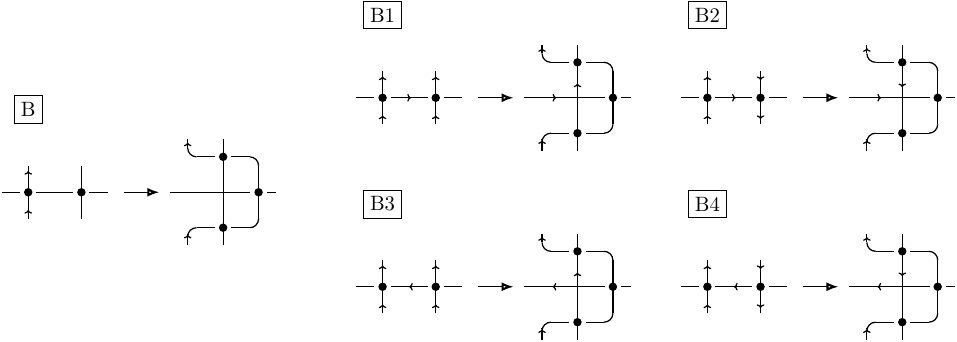}
    \caption{MP moves of type B.}
    \label{The MP moves of type B}
\end{figure}
\begin{figure}[H]
    \centering
    \includegraphics[scale=0.8]{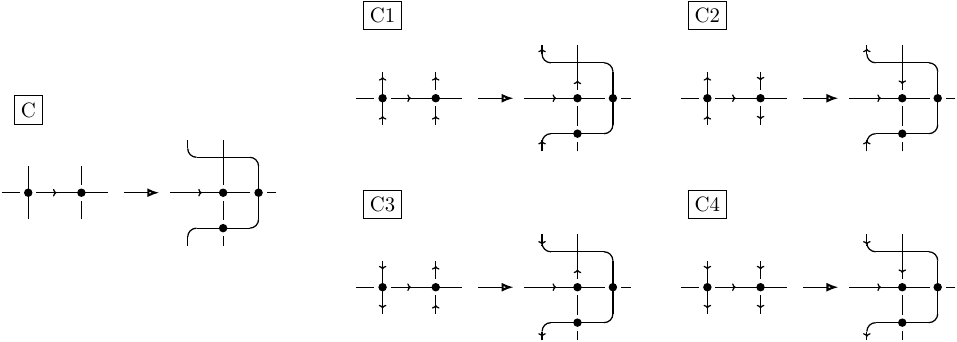}
    \caption{MP moves of type C.}
    \label{The MP moves of type C}
\end{figure}
\begin{figure}[H]
    \centering
    \includegraphics[scale=0.8]{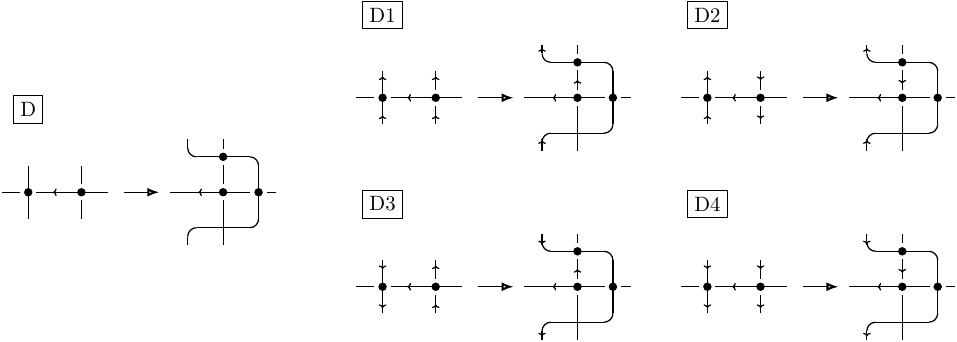}
    \caption{MP moves of type D.}
    \label{The MP moves of type D}
\end{figure}

The \textit{bumping MP moves} on normal o-graphs are shown in Figure~\ref{fig:BMP}.

\begin{figure}[H]
    \centering
    \includegraphics[scale=0.8]{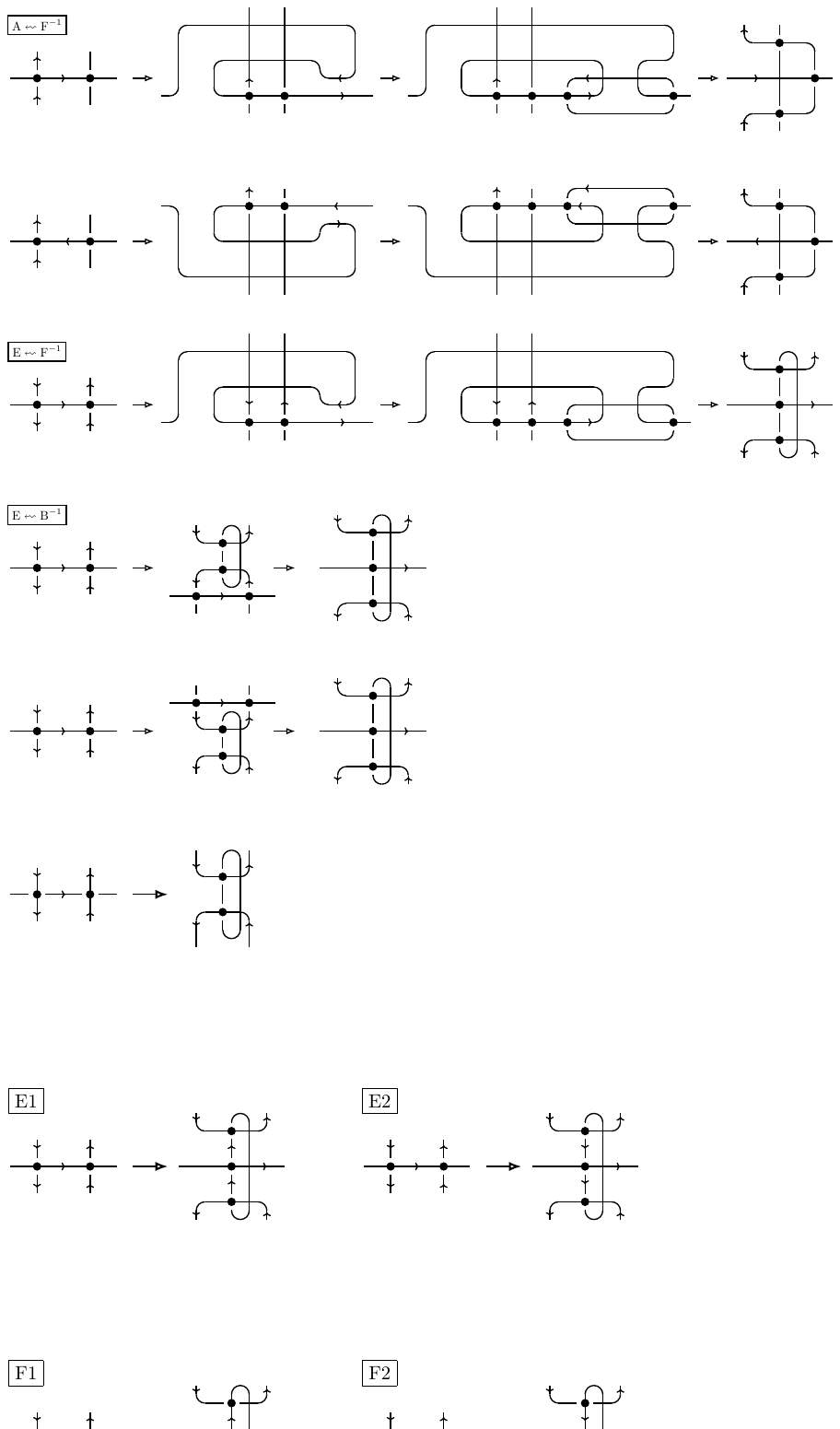}
 
    \includegraphics[scale=0.8]{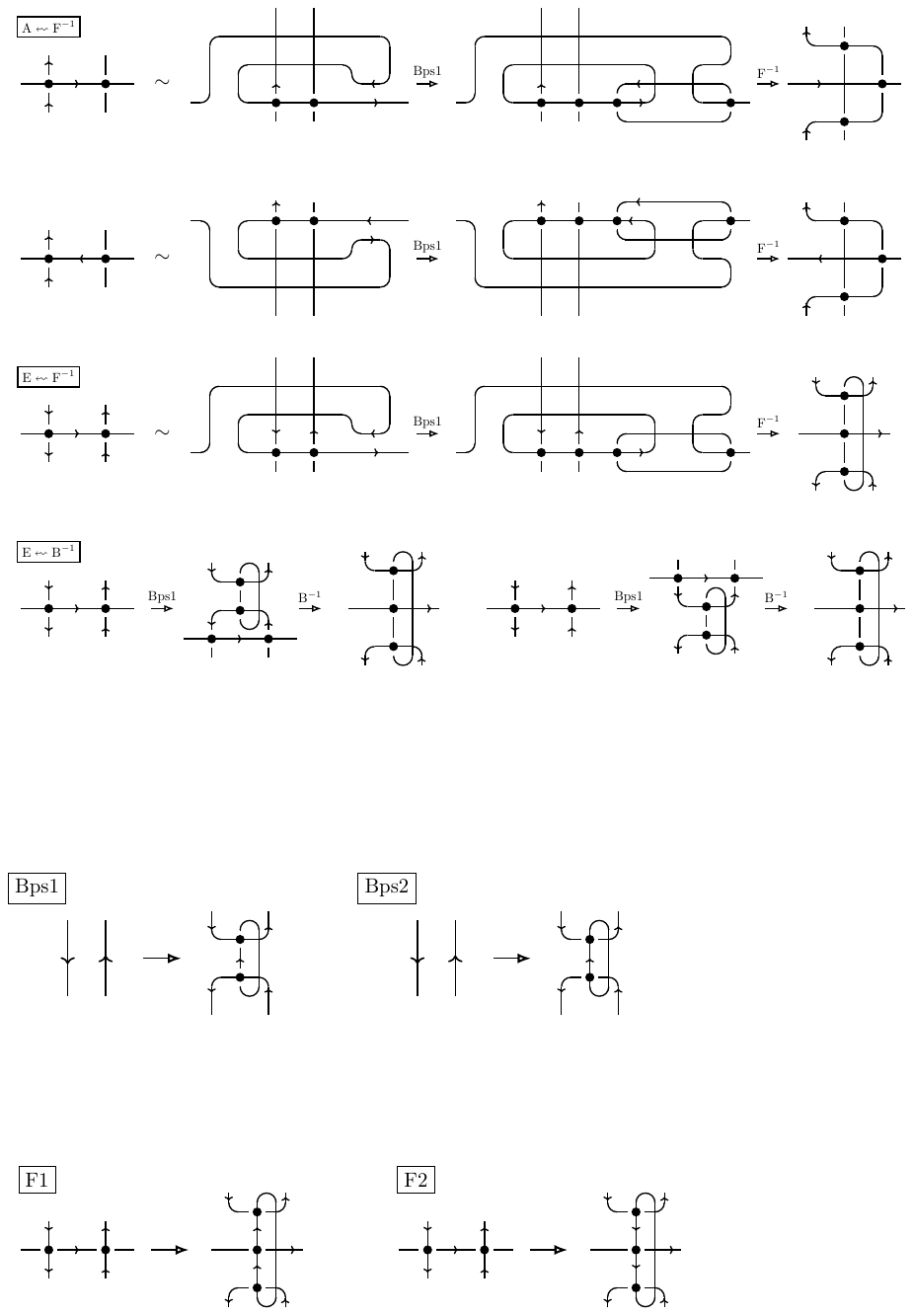}
   \caption{Bumping MP moves.}
    \label{fig:BMP}
\end{figure}

\subsection{Pure sliding moves and bumping pure sliding moves}
\label{02moves}

We translate the $6$ types of ordered $0$--$2$ moves on ideal triangulations
in terms of normal o-graphs: the \textit{pure sliding moves} and the
\textit{bumping pure sliding moves} on normal o-graphs,
shown in Figure~\ref{the pure sliding move} and Figure~\ref{fig:Bps}, respectively.

The local graph on the left-hand side of each move does not contain enough
combinatorial information to determine whether the corresponding two faces in the
ideal triangulation share an edge or how they are attached.
Therefore, we require certain global conditions on normal o-graphs in order to perform these moves.
\footnote{
The pure sliding moves were introduced in~\cite{BP} for branched spines and later
formulated combinatorially on normal o-graphs in~\cite{MST}. Bumping pure sliding
moves for branched spines also appear in~\cite{BP} and are formulated combinatorially
on normal o-graphs in the present paper.
}

\begin{figure}[H]
    \centering
    \includegraphics[scale=0.8]{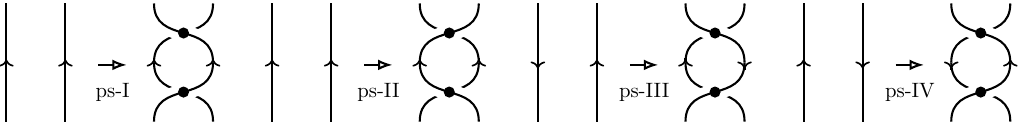}
    \caption{Pure sliding moves ps-I -- ps-IV.}
    \label{the pure sliding move}
\end{figure}

\begin{figure}[H]
    \centering
    \includegraphics[scale=1]{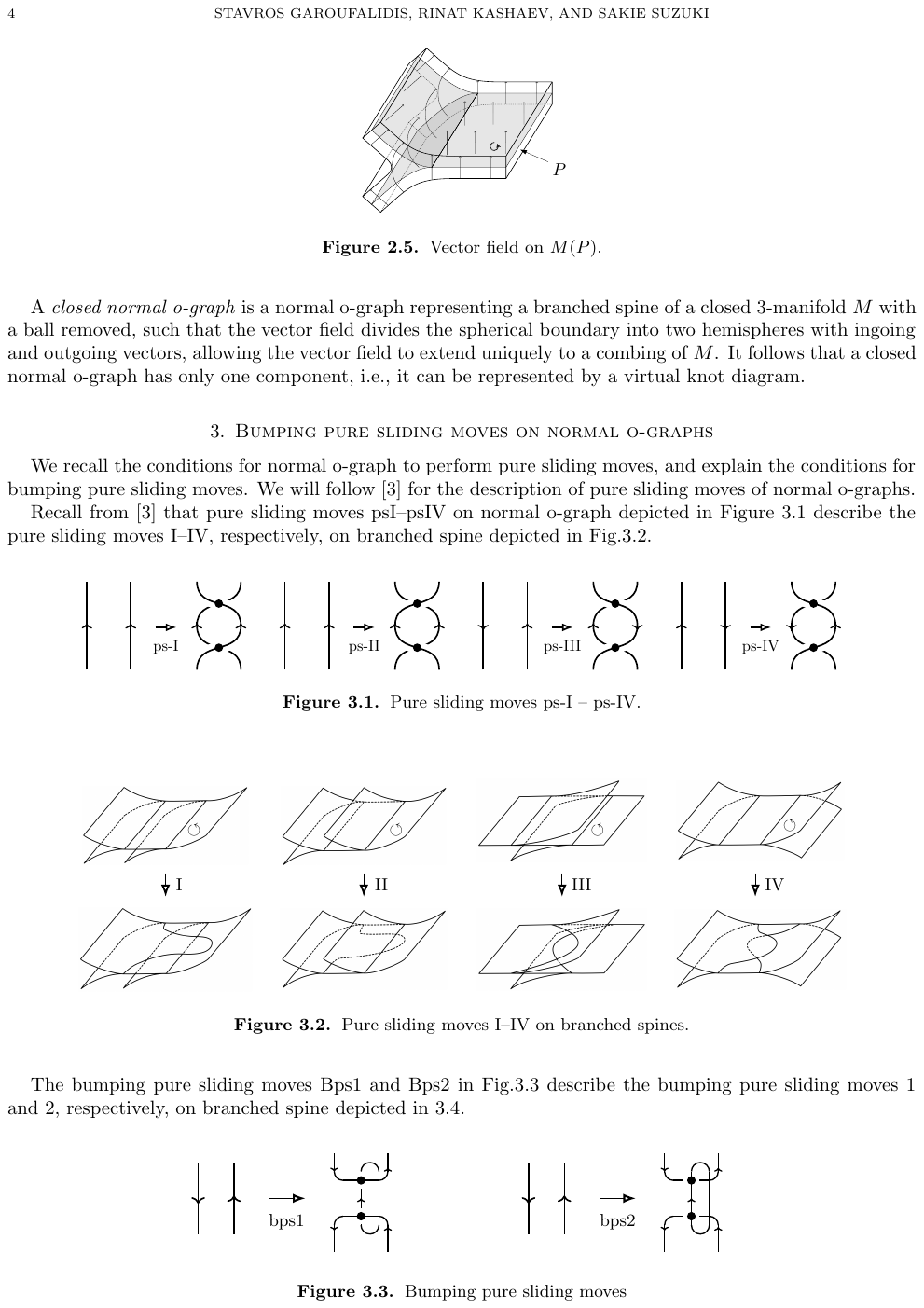}
     \caption{Bumping pure sliding moves.}
    \label{fig:Bps}
\end{figure}

We recall from~\cite{MST} the conditions (PS-I)--(PS-IV) under which ps-I--ps-IV
can be performed on normal o-graphs. 

Given a diagram of a normal o-graph $\Gamma$, we construct immersed oriented closed curves in $\mathbb{R}^2$ as follows.
First, we replace each true vertex of $\Gamma$ with six immersed arcs, as illustrated on the left-hand side of Figure~\ref{fig:circuit}.
Then we connect the endpoints of these arcs by arcs running parallel to the edges, as shown on the right-hand side of Figure~\ref{fig:circuit}.
We call the resulting diagram the \textit{circuit diagram} of $\Gamma$, following the construction in~\cite{BP}, as used also in~\cite{MST}.
For each edge, the three corresponding arcs in the circuit diagram are ordered from left to right when the edge is directed upwards.

\begin{figure}[H]
    \centering
    \includegraphics[scale=1]{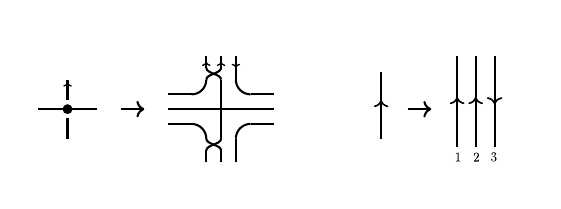}
    \caption{Circuit diagrams around a  vertex (left) and an edge (right).}   
    \label{fig:circuit}
\end{figure}

For each move ps-I--ps-IV, the two edges on which we perform the move must satisfy
the corresponding condition on the circuit diagram, as follows:

\begin{itemize}
\item[\rm (PS-I)]
  the third arc on the left is connected to the second arc on the right;
\item[\rm (PS-II)]
  the third arc on the left is connected to the first arc on the right;
\item[\rm (PS-III)]
  the first arc on the left is connected to the second arc on the right;
\item[\rm (PS-IV)]
  the third arc on the left is connected to the third arc on the right.
\end{itemize}
See Figure~\ref{the pure sliding move condi}.  

\begin{figure}[H]
    \centering
    \includegraphics[scale=0.6]{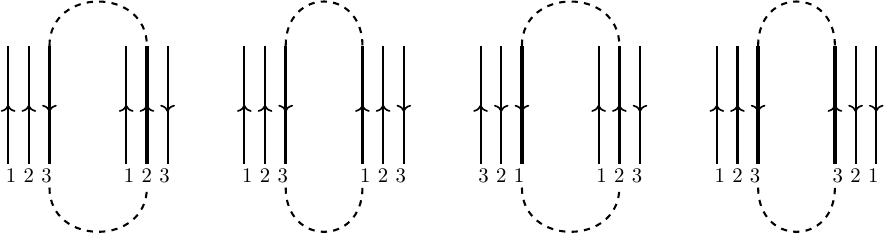}
    \caption{Conditions (PS-I)-- (PS-IV) for ps-I -- ps-IV, respectively.}
    \label{the pure sliding move condi}
\end{figure}

To perform the bumping pure sliding moves bps-1 and bps-2,  we need the
following conditions (BPS1) and (BPS2), respectively.

\begin{itemize}
\item[\rm (BPS1)]
  the second line on the left is connected to the second line on the right,
\item[\rm (BPS2)]
  the first line on the left is connected to the first line on the right.
\end{itemize}

\begin{remark}
The pure sliding moves and the bumping pure sliding moves on branched spines
are shown in Figure~\ref{fig:PS move} and Figure~\ref{fig:Bps move}, respectively.

\begin{figure}[H]
    \centering
    \includegraphics[scale=0.9]{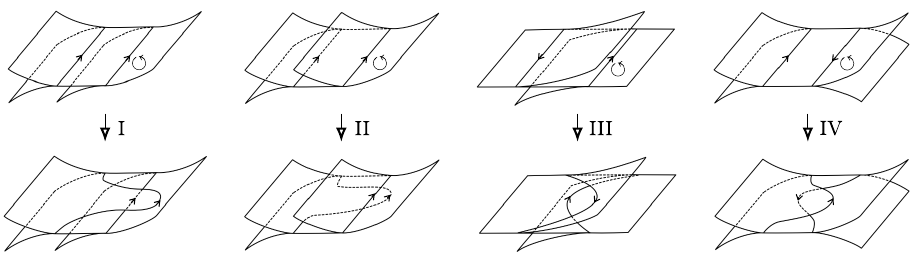}
    \caption{Pure sliding moves I--IV on branched spines.}
    \label{fig:PS move}
\end{figure}

\begin{figure}[H]
    \centering
    \includegraphics[scale=0.28]{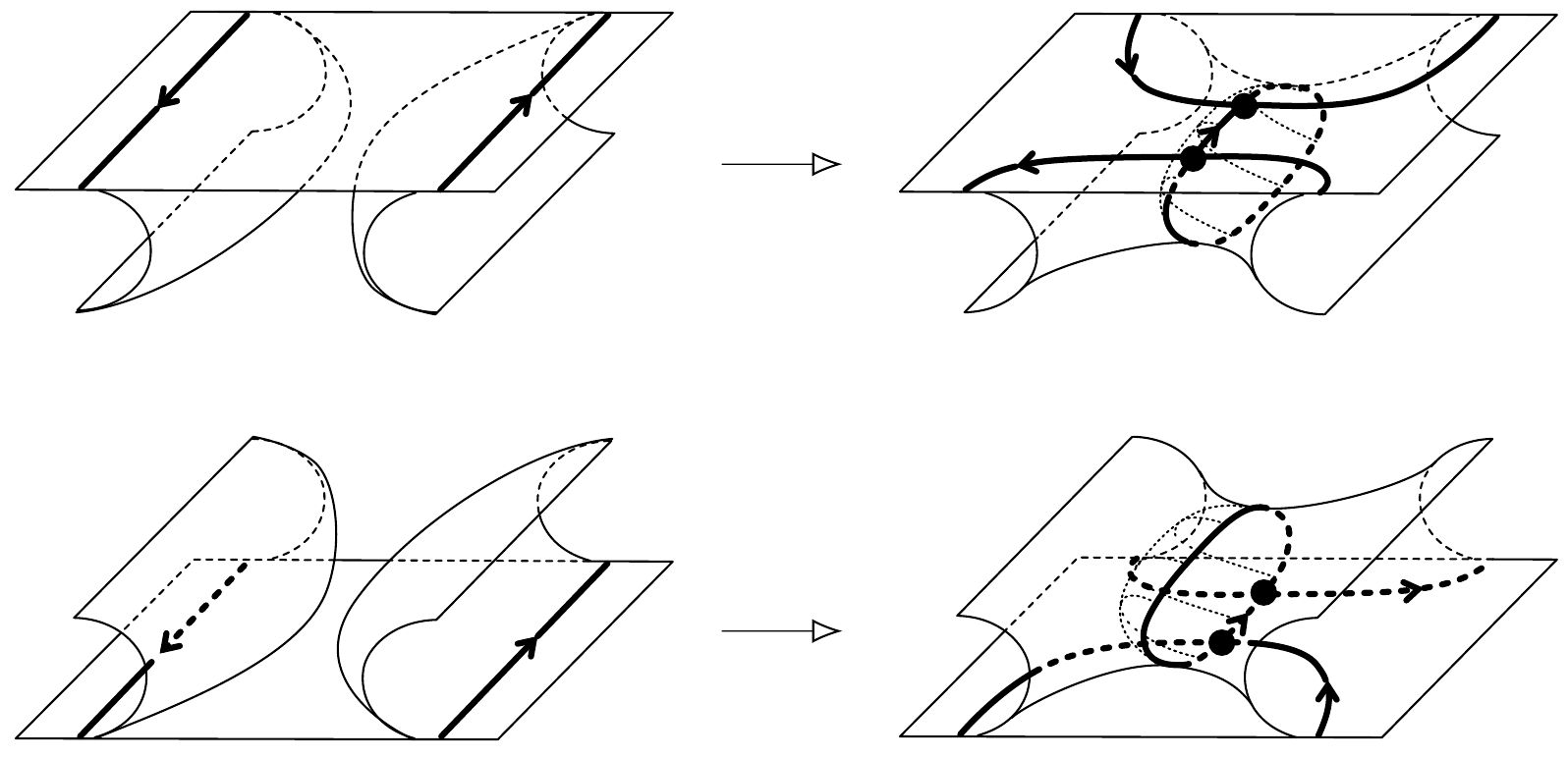}
    
       \begin{picture}(0,0)
         \put(-3,98){\footnotesize $1$}
     \put(-3,35){\footnotesize  $2$ }
      \end{picture}
    
    \caption{Bumping pure sliding moves 1 and 2.}
    \label{fig:Bps move}
\end{figure}

Figure~\ref{fig:correspondence} shows the correspondence between the three germs
of disks at an edge and the three arcs in the circuit diagram. Using this
correspondence, we can check that two edges satisfying each of the conditions
(PS-I)--(PS-IV) (resp.\ BPS1, BPS2) correspond to the left-hand sides of the
pure sliding moves I--IV (resp.\ the bumping pure sliding moves 1 and 2) on branched spines.

\begin{figure}[ht]
    \centering
    \includegraphics[scale=0.7]{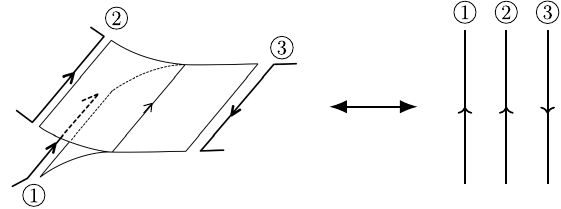}
    \caption{Correspondence between the three germs at an edge and the three
      arcs in the circuit diagram.}
    \label{fig:correspondence}
\end{figure}
\end{remark}

\begin{remark}
The four ordered $0$--$2$ moves shown in the top row of Figure~\ref{fig:ordered_02_all}
correspond to pure sliding moves I--IV (Figures~\ref{the pure sliding move} and~\ref{fig:PS move}),
while the two in the bottom row correspond to bumping pure sliding moves 1 and 2
(Figures~\ref{fig:Bps} and~\ref{fig:Bps move}).
\end{remark}

\subsection{CP move}

The \emph{combinatorial Pontryagin move} (in short, \emph{CP move}) is shown in
Figure~\ref{fig:CP}, which is introduced in \cite{BP} to relate different
combings of the same 3-manifold.
 
\begin{figure}[H]
    \centering
    \includegraphics[scale=0.8]{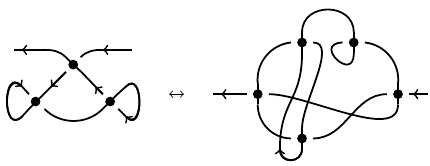}
    \caption{CP move.}
    \label{fig:CP}
\end{figure}


\section{Main results and proofs in terms of normal o-graphs}

\subsection{Main results}
\label{O-graph}

We denote by $\mathcal{M}$ the set of closed $3$-manifolds up to
orientation-preserving diffeomorphism. Benedetti and Petronio~\cite{BP} established
a combinatorial description of $\mathcal{M}$ by defining an equivalence relation
on closed normal o-graphs.

\begin{figure}[ht]
    \centering
    \includegraphics{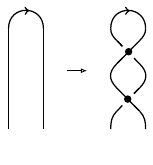}
  \caption{$0$--$2$ move.}
      \label{fig:02}
\end{figure}

\begin{theorem}[\cite{BP}]
\label{BP-thm}
There is a one-to-one correspondence between $\mathcal{M}$ and the set of closed
normal o-graphs, up to  MP moves, the \textit{$0$--$2$ move} shown
in Figure~\ref{fig:02}, and the CP move.
\end{theorem}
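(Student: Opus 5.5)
This is a theorem of Benedetti--Petronio, cited from \cite{BP}, so the plan is to reconstruct their argument through branched spines and combings rather than to derive it from earlier results here.

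\emph{Step 1: the forward map.} By Section~\ref{Section;Normal o-graphs}, a closed normal o-graph $\Gamma$ encodes an ordered ideal triangulation of $M\setminus\operatorname{int}(B^3)$ whose sutured boundary is the standard $S^2$. It therefore determines a closed oriented $3$-manifold $M$ together with a nowhere-vanishing vector field on it. The map $\Gamma\mapsto M$ is well defined on equivalence classes, because each MP move, the $0$--$2$ move of Figure~\ref{fig:02}, and the CP move is realized by a local modification of the dual branched spine inside a ball. Such a modification does not change the manifold; MP and $0$--$2$ moves even preserve the combing up to homotopy.

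\emph{Step 2: surjectivity.} Every closed oriented $3$-manifold carries a nonsingular vector field, since its Euler characteristic vanishes. By Ishii's flow-spine theory, or the branched standard spine construction of \cite{BP}, such a field can be realized by a standard branched spine whose boundary sutured structure is the standard one on $S^2$. The dual normal o-graph is then closed.

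\emph{Step 3: injectivity.} This is the core, and I would split it in two.

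First, two closed normal o-graphs representing the same combed manifold $(M,v)$, with $v$ up to homotopy, are related by MP and $0$--$2$ moves. The approach is to take a generic homotopy of flows, or of branched spines, between them and decompose it into elementary events. Each elementary event should be a branched MP or branched lune move, and these must be checked against the list in Figures~\ref{The MP moves of type A}--\ref{The MP moves of type D} and Figure~\ref{fig:02}. The additional difficulty compared with Matveev's unbranched theorem is that branching must be preserved throughout, and that moves creating a non-standard boundary suture must be avoided or compensated.

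Second, homotopy classes of combings on $M$ form a torsor over $H_1(M)$ together with the Pontryagin ($\mathbb{Z}/d$) part. I would show that the CP move realizes the generator of the Pontryagin change and that MP plus CP moves together realize the Euler-class changes. Then any two combings on the same $M$ become connected.

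The main obstacle is the first half of Step 3: establishing a branched Matveev--Piergallini calculus that keeps the closedness condition, meaning the standard $S^2$ suture, along the whole sequence. I would try to control the boundary behaviour by working with flow spines à la Ishii, where the closedness is built in, and then translate each flow-spine move into the o-graph moves.
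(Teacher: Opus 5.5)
The paper gives no proof of this theorem; it only cites \cite{BP}, so your outline has to stand on its own. Steps~1 and~2 are fine. The first half of Step~3 names the right ingredient, namely Benedetti--Petronio's theorem that closed normal o-graphs modulo sliding moves encode closed \emph{combed} manifolds. As written, however, it is only a programme. A generic homotopy of combings on $M$, even one fixed on $B^3$, need not stay traversing on $M\setminus\operatorname{int}(B^3)$, because closed or trapped orbits can appear; so it does not directly give a path of branched spines. Moreover, the lune-type events you would obtain are the four pure sliding moves. These still have to be reduced to the single $0$--$2$ move of Figure~\ref{fig:02} together with MP moves, as in \cite[Lemma~4.5.1]{BP}.

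The genuine gap is in the second half of Step~3. By your own Step~1, MP and $0$--$2$ moves preserve the homotopy class of the combing. So every change of combing between two closed o-graphs of the same $M$ has to come from CP moves, and this includes every change of the primary part (Euler class, or $\mathrm{Spin}^c$ structure), which ranges over an $H_1(M)$-torsor. You assign to the CP move only ``the generator of the Pontryagin change'', i.e.\ the secondary $\mathbb{Z}/d$ part, which is what a modification supported in a ball produces. But two combings that agree outside a ball have vanishing primary difference class, since $H^2(D^3,\partial D^3)=0$. With that picture your argument would at best identify the equivalence classes with combings modulo local changes (Euler structures), not with $\mathcal{M}$. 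The sentence ``MP plus CP moves together realize the Euler-class changes'' therefore has no mechanism behind it.

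What is missing is the observation that the CP move, although local on the o-graph (Figure~\ref{fig:CP}), is not local in $M$. Its left-hand side is a cluster of three ideal tetrahedra with two free faces, and all of its ideal vertices lie on the removed ball $B^3$. Dually, the local spine piece cuts its ball neighbourhood $U$ into three pieces, all belonging to $B^3$. Hence the induced change of the vector field is supported in $U\cup B^3$. This set deformation retracts onto a wedge of two circles, whose loops are edge loops of the one-vertex triangulation and can be essential in $M$. To close the argument you would have to:
\begin{enumerate}
\item compute the effect of the CP move on the combing, as a Pontryagin-type modification along such a loop;
\item determine its primary difference class;
\item show that, after repositioning the move by sliding moves, these modifications generate all of $H_1(M)$ and also the secondary changes.
\end{enumerate}
Only then are any two combings on $M$ connected.
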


Note that the $0$--$2$ move is a special case of a pure sliding move.

Muramatsu, Taguchi, and the third author~\cite{MST}
proved the following.
We choose and fix a preferred MP move among the 16 types of MP moves in Figures~\ref{The MP moves of type A}--\ref{The MP moves of type D}.

\begin{theorem}[\cite{MST}]
\label{MST-thm}
Each MP move can be realized as a sequence
consisting of  a single preferred MP move
(or its inverse), together with pure sliding moves and their inverses.
\end{theorem}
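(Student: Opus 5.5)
The plan is to show that the set of MP moves obtainable from the preferred one is closed under a small family of ``symmetry'' operations, each implemented by pure sliding moves, and that these operations act transitively on the 16 MP moves. Call an MP move \emph{derivable} if it can be realized by one application of the preferred MP move (or its inverse) together with pure sliding moves and their inverses. The preferred move is derivable by definition, so it suffices to show that derivability propagates between neighbouring types.

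\textbf{Step 1: a local transfer lemma.} Take a derivable MP move $m$ whose left-hand side involves two crossings joined by an edge. Near one of the six external edges, first apply a pure sliding move (one of ps-I--ps-IV) to insert a pair of crossings. This pair can be chosen so that one of its crossings, together with the original configuration, reproduces the left-hand side of $m$ with one external germ ``reflected'': either the position of that germ in the vertex ordering changes, or the over/under information at the adjacent crossing changes. Next apply $m$. Finally remove the auxiliary pair by an inverse pure sliding move, now sitting on the other side of the three new crossings. The net effect is an MP move $m'$ of a different type. The global conditions (PS-I)--(PS-IV) have to be checked on the circuit diagram. Because the two faces involved are adjacent across a common edge of the local picture, the connection of arcs is forced locally, using the arc correspondence in Figure~\ref{fig:correspondence}. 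So the condition can be verified inside the picture of the move, independently of the rest of $\Gamma$.

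\textbf{Step 2: the transitivity graph.} Form the graph whose vertices are the 16 MP moves (types A--D, Figures~\ref{The MP moves of type A}--\ref{The MP moves of type D}), with an edge $m\to m'$ whenever Step 1 produces $m'$ from $m$. I would tabulate these transfers by type and verify that the graph is strongly connected, using inverses where needed, since inverse moves are allowed. The transfers should at least move a move between a type and its mirror, and between types differing by the position of the edge shared by the three new crossings. That is enough to connect A, B, C and D. Consequently, starting from any preferred move, every MP move is a composition of exactly one preferred move (or inverse) and pure sliding moves.

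\textbf{Main obstacle.} The hard part is Step 1 in full detail. One has to find, for each required transfer, an explicit pure-sliding sequence whose conditions (PS-$\ast$) hold for purely local reasons, and make sure the auxiliary crossings can be removed afterwards. In particular the final inverse pure sliding must meet the circuit-diagram condition after the MP move has rearranged the arcs. I would first try the simplest case, transferring within type A by sliding an external edge across the shared edge. I would track the three circuit arcs through the move explicitly, then reuse the same pattern by symmetry (mirror and orientation reversal of all edges) for the remaining types.
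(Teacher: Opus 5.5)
Your Step~1 transfer has the shape $m'=\text{(ps)}\,;\,m\,;\,\text{(ps)}^{-1}$, and this is where the argument breaks.

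Let $n_\pm$ be the number of positive/negative crossings and $\sigma=n_+-n_-$. A pure sliding move inserts two ordered tetrahedra glued along two equally labelled faces. These must have opposite types, so the move adds one crossing of each sign and leaves $\sigma$ unchanged. You can check this on the E-data in Section~\ref{Proof}: every inverse pure sliding step deletes one vertex of type $1$ and one of type $-1$. An MP move adds one crossing and changes $\sigma$ by $\pm1$, and both signs occur. In the same computation C1 always raises $\sigma$ and C3 always lowers it, and a move and its mirror image always change $\sigma$ in opposite directions.

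Your conjugation gives $m'$ the same $\sigma$-change as $m$. So the graph of Step~2 on the 16 MP moves has at least two components and cannot be strongly connected. In particular, no transfer can take a move to its mirror, contrary to what you expect. Your mechanism also never uses $m^{-1}$, so it would realize every MP move with the preferred move $P$ itself. The invariant shows that the MP moves whose $\sigma$-change is opposite to that of $P$ genuinely require $P^{-1}$. Nothing in your setup produces the ``(or inverse)'' in your last sentence.

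The missing idea is the relation of \cite{MST} recalled in Figure~\ref{Relations among MP moves}. It is a single pure sliding move followed by the \emph{inverse} of another MP move, as in $\mathrm{A1}=$ ps-I followed by $\mathrm{D2}^{-1}$ (Figure~\ref{D2toA2}). The crossing count $+1=+2-1$ shows this is the only way one pure sliding move and one MP move can combine into an MP move. It links $X$ to $Y^{-1}$, reversing direction while preserving the $\sigma$-change. Reading each decomposition backwards ($X^{-1}=Y$ followed by ps$^{-1}$) supplies arrows in the second diagram, so each diagram becomes connected by arrows in both directions.

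The MP moves and their inverses thus split into two connected diagrams exchanged by inversion. Each diagram contains exactly one of $m,m^{-1}$ for every MP move, with $P$ in one and $P^{-1}$ in the other. This is precisely why the statement reads ``(or its inverse)''.

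Beyond this structural point, the real content of the theorem is the explicit finite list of such decompositions, with the (PS-$\ast$) conditions checked locally. Your proposal leaves that entirely undone.
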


As a consequence, we obtain the following alternative description of $\mathcal{M}$.
 
\begin{corollary}[\cite{MST}]
\label{MST-cor}
There is a one-to-one correspondence between $\mathcal{M}$
and the set of closed normal o-graphs,
up to a preferred MP move, pure sliding moves, and the CP move.
\end{corollary}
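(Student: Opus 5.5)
The plan is to combine Theorem~\ref{BP-thm} with Theorem~\ref{MST-thm}. I check two inclusions between equivalence relations on closed normal o-graphs. Let $\sim_{BP}$ be the relation generated by all MP moves, the $0$--$2$ move and the CP move. Let $\sim'$ be the relation generated by the preferred MP move, the pure sliding moves ps-I--ps-IV and the CP move. The corollary follows once I show that $\sim_{BP}$ and $\sim'$ coincide on closed normal o-graphs. The map to $\mathcal{M}$ is then the same as in Theorem~\ref{BP-thm}.

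First I show that $\sim'$ is contained in $\sim_{BP}$. The preferred MP move and the CP move are generators of $\sim_{BP}$. For pure sliding moves, each ps-I--ps-IV corresponds to an ordered $0$--$2$ move that preserves the sutured structure on the boundary. It therefore preserves the closedness condition and the underlying combed closed manifold. I would argue this in either of two ways.

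\begin{itemize}
\item[(a)] Use the remark in~\cite{BP} that every pure sliding move is a composition of MP moves and the basic $0$--$2$ move of Figure~\ref{fig:02}. This is how pure sliding moves are introduced there, as sliding a disk across a region.
\item[(b)] Observe directly that a pure sliding move does not change the orientation-preserving homeomorphism type of the underlying closed manifold. By the bijection in Theorem~\ref{BP-thm}, two closed normal o-graphs representing the same element of $\mathcal{M}$ are $\sim_{BP}$-equivalent.
\end{itemize}

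Route (b) is cleaner and only needs that the move is a $0$--$2$ Pachner move on the underlying ideal triangulation, so the manifold is unchanged. It also needs that the boundary sutures stay standard, so the result is again a closed normal o-graph representing the same closed manifold.

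Next I show that $\sim_{BP}$ is contained in $\sim'$. The $0$--$2$ move is a special case of a pure sliding move, as noted after Theorem~\ref{BP-thm}. The CP move is common to both relations. By Theorem~\ref{MST-thm}, each MP move equals a sequence consisting of one preferred MP move (or its inverse) together with pure sliding moves and their inverses.

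The main subtlety is that all intermediate o-graphs in that sequence must remain closed normal o-graphs. Otherwise the sequence would leave the set on which $\sim'$ is defined. Since pure sliding moves and MP moves preserve the sutured structure on the boundary, closedness is preserved at every step. Hence each generator of $\sim_{BP}$ is realized inside $\sim'$.

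Putting the two inclusions together, the relations coincide. Theorem~\ref{BP-thm} then yields the claimed bijection with $\mathcal{M}$.

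I expect the only real point requiring care to be this closedness-preservation check. It is also the point where bumping moves, which alter the sutures, must be excluded.
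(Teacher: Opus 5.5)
Your proposal is correct and is essentially the argument the paper leaves implicit when it says the corollary follows from Theorem~\ref{BP-thm} and Theorem~\ref{MST-thm}. The $0$--$2$ move is a special pure sliding move, each MP move reduces to the preferred one by Theorem~\ref{MST-thm}, and pure sliding moves come from MP and $0$--$2$ moves by \cite[Lemma~4.5.1]{BP} (your route (a), cf.\ Remark~\ref{Bfrom}), with closedness preserved throughout since none of these moves alters the sutures; your route (b), via injectivity of the bijection in Theorem~\ref{BP-thm}, is an equally valid shortcut.
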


It is natural to ask whether a simpler description of $\mathcal{M}$ exists that
does not involve the CP move. 
Benedetti and Petronio asked~\cite[Question 9.1.4]{BP} whether there is a one-to-one correspondence between $\mathcal{M}$ and the equivalence classes of normal o-graphs with $S^2$ boundary generated by the $0$--$2$ move, MP moves, and bumping MP moves. Costantino~\cite[Theorem 3.1]{C} gave an alternative description of $\mathcal{M}$ (in general, for compact oriented $3$-manifolds possibly with boundary) by employing \textit{bubble moves} together with bumping MP moves and bumping pure sliding moves.

We fix a preferred move among the 20 types of MP and bumping MP moves in Figures~\ref{The MP moves of type A}--\ref{fig:BMP}.
We prove the following proposition in Section~\ref{Proof}.

\begin{proposition}
\label{P0}
The CP move can be realized as a sequence of the preferred move, pure
sliding moves, bumping pure sliding moves,  and their inverses.
\end{proposition}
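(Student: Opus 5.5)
Our plan is to first reduce to having all MP and bumping MP moves at our disposal, and then to realize the CP move by an explicit sequence. For the reduction we extend Theorem~\ref{MST-thm}: every MP move (Figures~\ref{The MP moves of type A}--\ref{The MP moves of type D}) and every bumping MP move (Figure~\ref{fig:BMP}) should be expressible by the single preferred move together with pure sliding moves, bumping pure sliding moves, and their inverses. The preferred move may be one of the 20 MP or bumping MP moves. The method is that of \cite{MST}. We conjugate a given $2$--$3$ move by $0$--$2$ moves that flip the ordering type of one region. Concretely, we insert a pair of tetrahedra by a (bumping) pure sliding move next to one of the tetrahedra involved, apply the known $2$--$3$ move, and remove a pair by an inverse sliding move. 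The net effect replaces one ordered $2$--$3$ pattern by another differing in the orientation of one edge.

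We organize this as a graph whose vertices are the 20 move types, with an edge whenever one type is obtained from another by such a conjugation. Theorem~\ref{MST-thm} already shows that the 16 MP types form one connected component using only pure sliding moves. What remains is to attach the 4 bumping MP types. Each bumping MP move differs from some MP move in the orientation of one edge adjacent to the region where the boundary suture is bumped. We therefore look for a bumping pure sliding move (bps-1 or bps-2) inserted at that edge, after which the bumping MP move becomes an MP move followed by an inverse bumping pure sliding move. This makes the graph connected, so the choice of preferred move does not matter (this is Corollary~\ref{bMP}).

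Next we decompose the CP move itself. Geometrically, the CP move changes the combing by a Pontryagin-type twist near a vertex. In the ordered ideal triangulation picture, it inserts a configuration of tetrahedra whose induced vector field differs by an index/Euler-class modification. We plan to factor it as follows. First, a bumping pure sliding move bps-1 creates a local bump in the sutured boundary structure on $S^2$. Second, a sequence of MP and bumping MP moves transports this local configuration across the diagram of Figure~\ref{fig:CP}, so that the left-hand side is converted into the right-hand side with an additional bump. Third, an inverse bumping pure sliding move (bps-2 or bps-1) removes the bump and restores the standard sutured $S^2$ boundary. Each step is checked on circuit diagrams: (BPS1)/(BPS2) and (PS-I)--(PS-IV) must hold at the chosen edges, which we read off from Figure~\ref{fig:circuit}. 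The conditions for the closing inverse move must hold after the intermediate MP moves.

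We expect the main obstacle to be the explicit middle part of the CP decomposition. One has to find an actual finite sequence of diagrams connecting the CP left side, after a bump, to its right side. One also has to verify at each step the global circuit conditions for the sliding moves, since these are nonlocal. I would first look for this sequence by working with the dual branched spines (Figures~\ref{fig:PS move},~\ref{fig:Bps move}), where a bumping sliding move visibly pushes a sheet across the boundary sphere. I would use Costantino's bubble-move analysis \cite{C} as a guide for the needed intermediate configurations, and then translate back to normal o-graphs.
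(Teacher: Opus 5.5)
\textbf{There is a genuine gap: the decomposition of the CP move is never given.}

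Your reduction step is fine and is exactly what the paper does. By Theorem~\ref{MST-thm} and Corollary~\ref{bMP}, it suffices to write the CP move in terms of all MP moves, bumping MP moves, pure sliding moves and bumping pure sliding moves, together with their inverses. (Your sketch of why Corollary~\ref{bMP} holds is not the mechanism actually used. There, each relation writes one $2$--$3$ move as a single sliding or bumping sliding move followed by the \emph{inverse} of another $2$--$3$ move, which is why the moves fall into two diagrams exchanged by inversion. Since you only cite the corollary, this is harmless.)

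The gap is everything after that. Proposition~\ref{P0} is a finite combinatorial statement. Its whole content is an explicit sequence of moves from the left-hand side of Figure~\ref{fig:CP} to its right-hand side, and your proposal does not contain one. Your three-stage scheme (create a bump by bps-1, transport it by MP and bumping MP moves, remove it by an inverse bumping sliding move) names no actual moves and no intermediate diagrams. It also does not verify the conditions (PS-I)--(PS-IV), (BPS1), (BPS2), and you say yourself that finding the sequence is the main obstacle. Nor is there evidence that a working sequence has that shape. The one in the paper interleaves three bumping MP moves ($\mathrm{E2}$, $\mathrm{E1}$, $\mathrm{F1}^{-1}$) and four bumping sliding moves with ordinary MP and pure sliding moves.

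What is missing is a concrete strategy plus a bookkeeping device. The paper encodes o-graphs with open ends by Gauss codes (E-data). Instead of transporting a configuration from one side to the other, it simplifies \emph{both} sides of the CP move to the same two-vertex o-graph $[[a,-1,2,-2,1,b];[-1,1]]$:
\begin{itemize}
\item the left-hand side via $\mathrm{C3},\mathrm{C3},(\text{ps-I})^{-1},\mathrm{C1},(\text{ps-III})^{-1},\mathrm{C3},\mathrm{E2},\mathrm{D1},\mathrm{B2},(\text{bps-2})^{-1},\mathrm{E1},(\text{ps-IV})^{-1},\mathrm{C1},(\text{bps-2})^{-1}$;
\item the right-hand side via $\mathrm{F1},(\text{ps-III})^{-1},\mathrm{A1},(\text{bps-1})^{-1},\mathrm{C1},(\text{bps-2})^{-1}$.
\end{itemize}
This gives twenty steps in all.

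Each reduction uses only forward $2$--$3$ moves, which are purely local, together with vertex-removing inverse sliding moves. Each inverse move is applied to a two-vertex configuration read off from the E-datum. The forward sliding moves in the composite path simply undo these removals. This organization largely sidesteps the non-local circuit conditions you identify as a difficulty. Without such an explicit certificate, or a structural argument replacing it, your proposal is a search plan rather than a proof.
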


Together with Corollary~\ref{MST-cor}, Proposition~\ref{P0} yields the following description
of closed $3$-manifolds, equivalent to Theorem~\ref{main} in the introduction.

\begin{theorem}
\label{P}
There is a one-to-one correspondence between  $\mathcal{M}$  and the set of closed
normal o-graphs, up to the preferred move, pure sliding moves, and
bumping pure sliding moves.  
\end{theorem}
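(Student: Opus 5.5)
Theorem~\ref{P} will follow by combining Corollary~\ref{MST-cor} with Proposition~\ref{P0}, once we check two compatibility points: that the new relation is no coarser than the one in Corollary~\ref{MST-cor}, and that it is no finer.

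\textbf{Step 1: the new moves preserve the manifold.} Each bumping pure sliding move is an ordered $0$--$2$ move on the underlying ordered ideal triangulation. Hence it is a $0$--$2$ move on an ideal triangulation of $M\setminus \operatorname{int}(B^3)$, and it does not change the homeomorphism type of $M\setminus\operatorname{int}(B^3)$. The same holds for the preferred move when it is a bumping MP move.

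Two points need checking here.
\begin{itemize}
\item[(a)] Although these moves may change the sutured structure, we only compare closed normal o-graphs at the two ends of a sequence. Intermediate o-graphs may have non-standard sutures on the $S^2$ boundary.
\item[(b)] The closed manifold $M$ is recovered from $M\setminus \operatorname{int}(B^3)$ by capping the sphere with a ball. This is independent of sutures, and capping is unique up to orientation-preserving homeomorphism (Alexander's theorem).
\end{itemize}
So the map from closed normal o-graphs to $\mathcal{M}$, which by Theorem~\ref{BP-thm} is well defined and surjective, factors through the equivalence of Theorem~\ref{P}. The only subtlety is that a sequence may pass through non-closed o-graphs. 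By the remark after Theorem~\ref{main}, the relation is the restriction of the relation on all o-graphs, so this is allowed.

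\textbf{Step 2: injectivity.} Suppose two closed normal o-graphs give the same element of $\mathcal{M}$. By Corollary~\ref{MST-cor} they are related by the preferred MP move, pure sliding moves and CP moves. By Proposition~\ref{P0} each CP move is a sequence of the preferred move, pure sliding moves and bumping pure sliding moves.

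It remains to reconcile the preferred MP move of Corollary~\ref{MST-cor} with the preferred move of Theorem~\ref{P}, which is chosen among all $20$ MP and bumping MP moves. I would do this via Theorem~\ref{main0} (Corollary~\ref{bMP}). Any of the $20$ moves is realized by the chosen preferred one together with ($0$--$2$) pure and bumping pure sliding moves. Hence the preferred MP move of Corollary~\ref{MST-cor} can be expressed in the new moves. Thus the two equivalence relations coincide, and the correspondence is bijective.

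The main obstacle is this last point. If the preferred move is a bumping MP move, Theorem~\ref{MST-thm} does not apply directly, and one needs the bumping analogue: each bumping MP move is realized by one MP move plus pure and bumping pure sliding moves, and conversely. I would prove this by explicit local diagram manipulations, inserting a bumping $0$--$2$ pair adjacent to the region, applying an MP move, and removing the pair. This should be exactly the content of Corollary~\ref{bMP}, which I would invoke.
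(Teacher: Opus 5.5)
Your proposal is correct and follows the paper's route. Theorem~\ref{P} comes from combining Corollary~\ref{MST-cor} with Proposition~\ref{P0}, with Corollary~\ref{bMP} (Theorem~\ref{main0}) reconciling the preferred MP move with a preferred move chosen among all $20$; your Step~1 just makes explicit the soundness check the paper leaves implicit, namely that bumping moves are $0$--$2$/$2$--$3$ moves on a triangulation of $M\setminus\operatorname{int}(B^3)$ and that capping off is unique. The only minor inaccuracy is your sketch of Corollary~\ref{bMP}: Lemma~\ref{Lem1} decomposes each move as a single bumping pure sliding move followed by the inverse of another MP or bumping MP move, not as an insert--move--remove pattern, but since you invoke the corollary anyway this does not affect your argument.
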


Theorem~\ref{P} refines Costantino's result~\cite[Theorem 3.1]{C} and also gives an affirmative answer to a restricted version of Benedetti and Petronio's question~\cite[Question 9.1.3]{BP} for closed normal o-graphs.
Indeed, each move used in Theorem~\ref{P} can be obtained as a sequence of the $0$--$2$ move, MP moves, and bumping MP moves, and their inverses (see Remark~\ref{Bfrom}).

\subsection{Relations among moves}
\label{rel}

We describe the precise relations among the moves on normal o-graphs.
Recall from~\cite{MST} that the following relations hold among the MP moves
and their inverses.

\begin{figure}[H]
    \centering
    \includegraphics[scale=1]{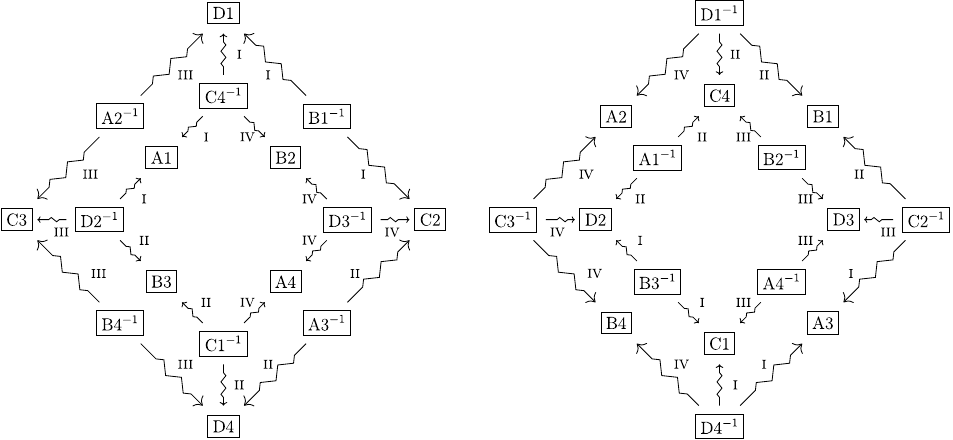}
    \caption{Relations among the MP moves. An arrow from a move $Y$ to a move $X$
      indicates that $X$ is obtained by composing $Y$ with a pure sliding move.
      The labels I--IV attached to the arrows indicate the type of pure sliding move.
      The two MP moves appearing at the same position in the left and right diagrams
      are inverses of each other, and the arrows at corresponding positions point
      in opposite directions.
}
    \label{Relations among MP moves}
\end{figure}

For example, The move A1 is sequence of D$2^{-1}$ and ps-I as shown in
Figure~\ref{D2toA2}.

\begin{figure}[H]
    \centering
    \includegraphics[scale=0.7]{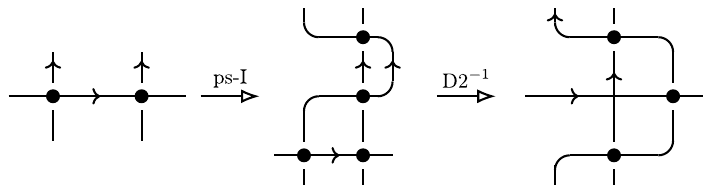}
     \caption{Decomposition of $\mathrm{A1}$ into ps-I followed by $\mathrm{D2}^{-1}$.}
    \label{D2toA2}
\end{figure}

Note that, by tracing the decomposition in the opposite direction, from right to left, 
$\rm A1^{-1}$ decomposes as a sequence of $\rm D2$ followed by the inverse of ps-I.
In Figure~\ref{Relations among MP moves}, this reverse decomposition allows us to
add an arrow in the other diagram at the corresponding position and with the same
direction. Similarly, a reverse arrow can be added for each arrow in
Figure~\ref{Relations among MP moves}. Consequently, each pair of MP moves
(and their inverses) in the same diagram is connected by arrows in both directions.

As observed in~\cite[Remark 2.3]{MST}, the diagram in Figure~\ref{Relations among MP moves}
lacks certain symmetries: in particular, there are no arrows between moves of
A-type and B-type. This reflects the asymmetry of branching structures.
If we also consider bumping moves, the diagram becomes more symmetric.

\begin{lemma}
\label{Lem1}
Relations involving bumping moves are given in Figure~\ref{Relations among bMP moves}.
\end{lemma}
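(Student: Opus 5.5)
The plan is to verify each arrow of Figure~\ref{Relations among bMP moves} by an explicit local computation, in the same way the relations of Figure~\ref{Relations among MP moves} were checked in~\cite{MST}. In the example of Figure~\ref{D2toA2}, A1 is written as ps-I followed by $\mathrm{D2}^{-1}$. Each arrow from a move $Y$ to a move $X$ in the new figure asserts that $X$ is the composition of $Y$ (or $Y^{-1}$) with a single pure sliding or bumping pure sliding move. Here $X$ or $Y$ is a bumping MP move from Figure~\ref{fig:BMP}, and the other is either an MP move of type A--D or another bumping MP move. For each arrow I would do the following.

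First, I draw the left-hand side of $X$ as a normal o-graph diagram. Second, I apply the indicated sliding move (ps-I--ps-IV or bps-1, bps-2) on the appropriate pair of edges. This creates two new crossings and puts the local picture into the form of the left-hand side of $Y$ (or of the right-hand side of $Y^{-1}$). Third, I apply $Y^{\pm1}$ and check, using the Reidemeister-type moves of Figure~\ref{fig:RM} and planar isotopy, that the result agrees with the right-hand side of $X$. In terms of ordered ideal triangulations, the same check reads as follows: inserting the $0$--$2$ ball and then performing the $2$--$3$ move yields the same ordered triangulation of the bounded region, with the same ordering on the six boundary faces, as the target $2$--$3$ move. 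Since a $2$--$3$ move is determined by the orderings on the six boundary faces together with the orientation of the central edge, it is enough to compare these boundary data and the central-edge orientation. I would organize the cases by which boundary face pair is modified: a $0$--$2$ insertion along two faces sharing an edge changes the ordering pattern seen by the $2$--$3$ move on exactly those faces.

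After all the arrows are established, I would add the reverse arrows using the same observation as in the paragraph after Figure~\ref{D2toA2}. Reading each decomposition from right to left gives the inverse relation, now involving $X^{-1}$, $Y^{\mp1}$ and the inverse sliding move. This shows that the diagram is connected in both directions.

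The main obstacle is checking the global applicability conditions. The sliding moves can only be applied when the circuit-diagram conditions (PS-I)--(PS-IV), (BPS1) or (BPS2) hold on the two edges involved. These conditions are not local a priori, but here the two edges are adjacent to crossings inside the local picture of $X$. I would therefore trace the three arcs of the circuit diagram (Figure~\ref{fig:circuit}) through the one or two crossings separating the edges, using the correspondence in Figure~\ref{fig:correspondence}, and confirm that the required arc connection is realized within the local picture. For the bumping cases, I additionally expect to have to track that the sutured structure changes consistently: a bps move changes the boundary multicurve, and this change must cancel against the change caused by the bumping MP move. The arc-tracing for conditions (BPS1) and (BPS2) is the step I would do most carefully.
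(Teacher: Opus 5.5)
Your plan (apply the labelled bumping pure sliding move to the left-hand side of $X$, then $Y$, and compare the result with the right-hand side of $X$, checking (BPS1)/(BPS2) locally) matches the paper's proof, which consists of explicit figures; your separate sutured-structure bookkeeping is unnecessary, since it follows automatically once the diagrams agree. The one shortcut the paper takes is a symmetry: it draws only $\mathrm{B}^{-1}\rightsquigarrow\mathrm{E}$, $\mathrm{F}^{-1}\rightsquigarrow\mathrm{E}$ and $\mathrm{F}^{-1}\rightsquigarrow\mathrm{A}$, and obtains $\mathrm{A}^{-1}\rightsquigarrow\mathrm{F}$, $\mathrm{E}^{-1}\rightsquigarrow\mathrm{F}$ and $\mathrm{E}^{-1}\rightsquigarrow\mathrm{B}$ by changing the signs of all crossings.
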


\begin{figure}[H]
    \centering
    \includegraphics[scale=1]{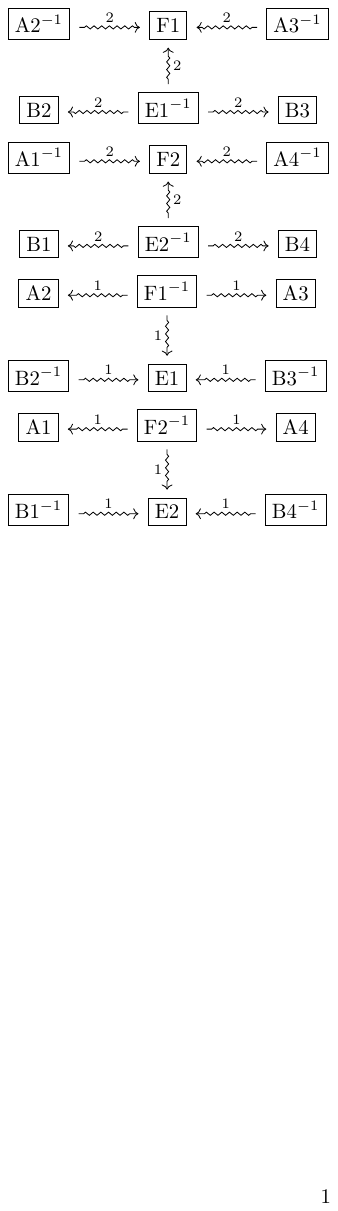} \quad 
   \includegraphics[scale=1]{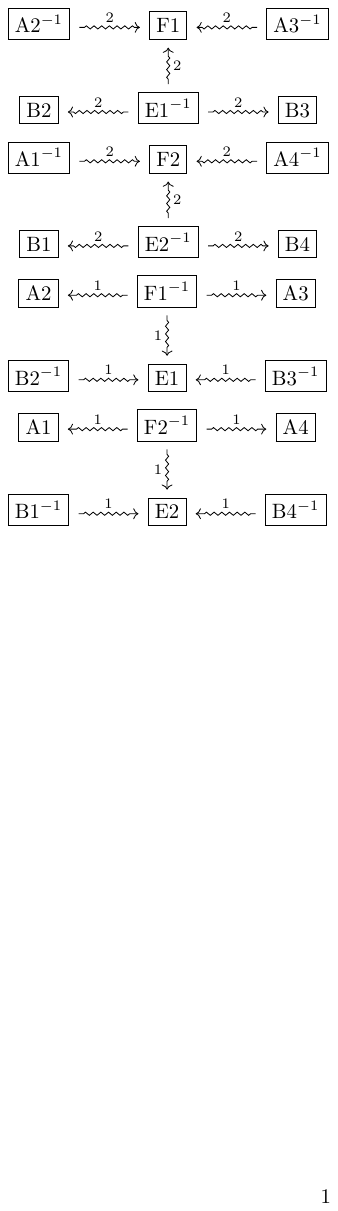} 

   \includegraphics[scale=1]{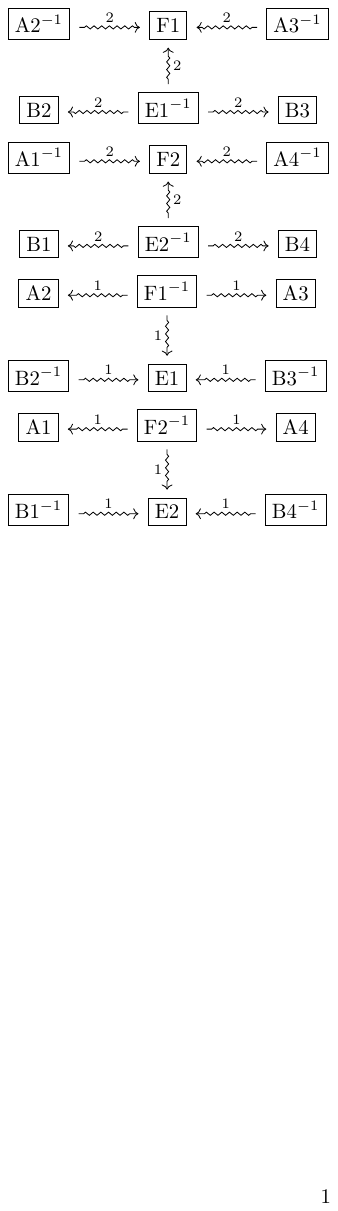} \quad
   \includegraphics[scale=1]{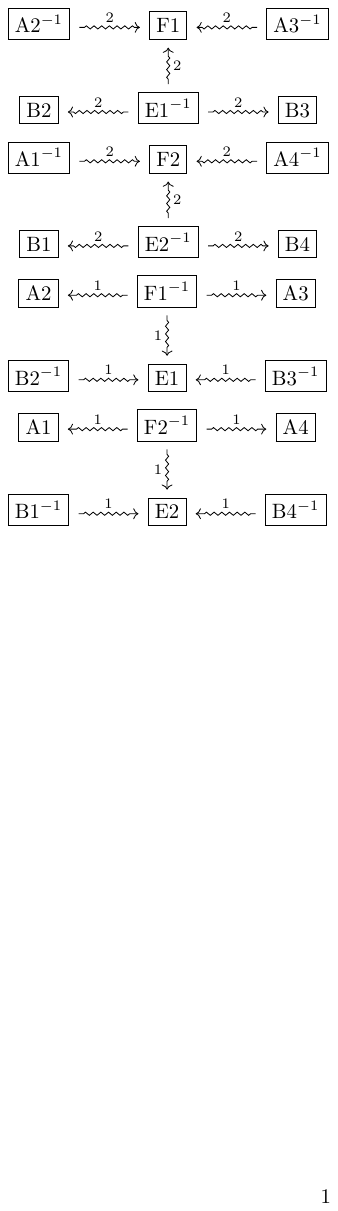}
   \caption{Relation involving bumping moves. An arrow from a move $Y$ to a move
     $X$ indicates that $X$ is decomposed into a bumping pure sliding move followed
     by $Y$. The label 1,2 attached to each arrow indicate the type of bumping
     pure sliding moves.
}
    \label{Relations among bMP moves}
\end{figure}

\begin{proof}
The cases of type $\mathrm{B}^{-1} \rightsquigarrow \mathrm{E}$,
$\mathrm{F}^{-1} \rightsquigarrow \mathrm{E}$, and
$\mathrm{F}^{-1} \rightsquigarrow \mathrm{A}$
are shown in Figures~\ref{EfromB}--\ref{AfromF}, where
the orientations of some edges are omitted and are determined
by the choice of subtype
($\mathrm{A}_1,\dots,\mathrm{A}_4$, $\mathrm{B}_1,\dots,\mathrm{B}_4$,
$\mathrm{E}_1,\mathrm{E}_2$, $\mathrm{F}_1,\mathrm{F}_2$).

The remaining cases, namely
$\mathrm{A}^{-1} \rightsquigarrow \mathrm{F}$,
$\mathrm{E}^{-1} \rightsquigarrow \mathrm{F}$, and
$\mathrm{E}^{-1} \rightsquigarrow \mathrm{B}$,
are obtained similarly by changing the signs of the crossings.

\begin{figure}[H]
    \centering
 \includegraphics[scale=1]{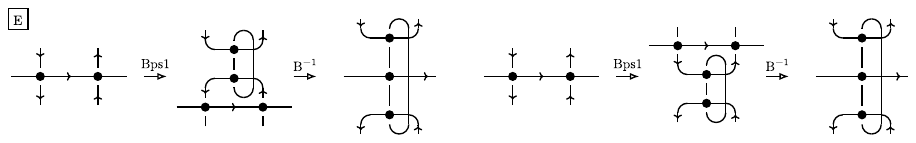}
     \caption{Proof of $\mathrm{B}_2^{-1} \protect\rightsquigarrow \mathrm{E}_1$,
$\mathrm{B}_4^{-1} \protect\rightsquigarrow \mathrm{E}_2$ (left),
$\mathrm{B}_3^{-1} \protect\rightsquigarrow \mathrm{E}_1$,
$\mathrm{B}_1^{-1} \protect\rightsquigarrow \mathrm{E}_2$ (right).}
    \label{EfromB}
\end{figure}

\begin{figure}[H]
    \centering
 \includegraphics[scale=1]{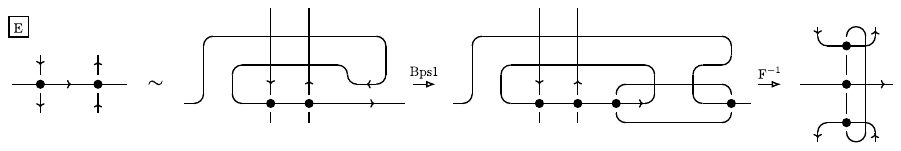}
      \caption{Proof of $\mathrm{F}_1^{-1} \protect\rightsquigarrow \mathrm{E}_1$ and
$\mathrm{F}_2^{-1} \protect\rightsquigarrow \mathrm{E}_2$.
        }
    \label{EfromF}
\end{figure}

\begin{figure}[H]
    \centering
 \includegraphics[scale=1]{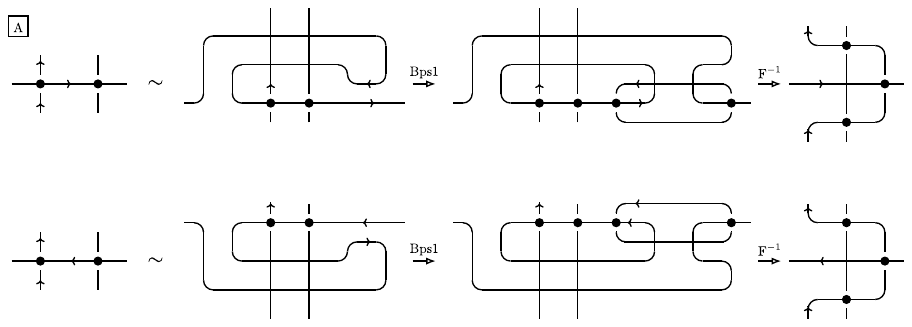}
      \caption{Proof of $\mathrm{F}_2^{-1} \protect\rightsquigarrow \mathrm{A}_1$,
$\mathrm{F}_1^{-1} \protect\rightsquigarrow \mathrm{A}_2$ (top),
$\mathrm{F}_1^{-1} \protect\rightsquigarrow \mathrm{A}_3$,
$\mathrm{F}_2^{-1} \protect\rightsquigarrow \mathrm{A}_4$ (bottom).}
    \label{AfromF}
\end{figure}
\end{proof}

As in Proposition~\ref{P0}, we fix a preferred move among the 20 types of MP and bumping MP moves.

\begin{corollary}
\label{bMP}
Each bumping MP move can be realized as a sequence consisting of a single preferred move
(or its inverse),  together with
pure sliding moves and bumping pure sliding moves, and their inverses.
\end{corollary}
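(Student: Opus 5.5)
The plan is a connectivity argument on a graph of moves. Form the directed graph $G$ whose vertices are the $20$ moves: the $16$ MP moves of types A--D and the $4$ bumping MP moves $\mathrm{E}_1,\mathrm{E}_2,\mathrm{F}_1,\mathrm{F}_2$, together with their inverses. Draw an arrow $Y\to X$ whenever $X$ is obtained by composing $Y$ with a pure sliding move or a bumping pure sliding move, or the inverse of one. The arrows come from Figure~\ref{Relations among MP moves} and from Lemma~\ref{Lem1}.

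Arrows compose. If $Y\to X\to Z$, then $Z$ is a sequence consisting of $Y$ (used once) together with sliding moves. So it suffices to show that from the preferred move $P$, or from $P^{-1}$, there is a directed path to every bumping MP move and to every inverse of one.

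First, symmetrize the arrows, exactly as the paper does after Figure~\ref{D2toA2}. Reading a decomposition $X = Y\circ s$ from right to left gives $X^{-1} = s^{-1}\circ Y^{-1}$. This is an arrow $Y^{-1}\to X^{-1}$ in the ``inverse'' copy of the diagram, in the same direction. Combining this with the left/right correspondence in Figure~\ref{Relations among MP moves}, and with the analogous correspondence for the four diagrams of Figure~\ref{Relations among bMP moves}, every arrow comes with a reverse arrow. Hence connectivity in $G$ can be treated as undirected connectivity.

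The cases then split according to where $P$ lies.

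\begin{itemize}
\item If $P$ is an MP move, Theorem~\ref{MST-thm} connects $P$ (or $P^{-1}$) to every MP move of types A--D. Lemma~\ref{Lem1} then supplies arrows from these to the bumping moves: $\mathrm{B}^{-1}\rightsquigarrow\mathrm{E}$ and $\mathrm{A}^{-1}\rightsquigarrow\mathrm{F}$, together with their variants with signs changed, $\mathrm{E}^{-1}\rightsquigarrow\mathrm{F}$ and so on. Every $\mathrm{E}_i$ and $\mathrm{F}_i$ is therefore reached.
\item If $P$ is itself a bumping move, Lemma~\ref{Lem1} connects it to some A- or B-type move, for instance $\mathrm{F}^{-1}\rightsquigarrow\mathrm{A}$. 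Theorem~\ref{MST-thm} then spreads to all MP moves, and we return to the first case to reach the remaining bumping moves.
\end{itemize}

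Each path uses the preferred move exactly once, with its inverse when parity requires it. This yields the corollary; the same argument also gives Theorem~\ref{main0} for all $20$ moves.

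The main obstacle is the bookkeeping of subtypes and inverses. One must check that the subscripts listed in Figures~\ref{EfromB}--\ref{AfromF} actually hit each of $\mathrm{E}_1,\mathrm{E}_2,\mathrm{F}_1,\mathrm{F}_2$ and not only some of them. For example, $\mathrm{B}_2^{-1},\mathrm{B}_3^{-1}\rightsquigarrow\mathrm{E}_1$ and $\mathrm{B}_4^{-1},\mathrm{B}_1^{-1}\rightsquigarrow\mathrm{E}_2$ cover both E's, and $\mathrm{F}_1^{-1},\mathrm{F}_2^{-1}$ reach all of $\mathrm{A}_1$--$\mathrm{A}_4$. One must also check that passing between a move and its inverse is always legitimate. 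I would settle this by tabulating the arrows explicitly for both orientations and verifying that the undirected graph on $\{X, X^{-1}\}$ is connected.
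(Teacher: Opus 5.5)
Your argument is correct and is essentially the paper's: the paper likewise attaches the bumping-move diagrams of Lemma~\ref{Lem1} to the MP diagrams, obtains reverse arrows by reading decompositions backwards, and concludes by connectivity. The only difference is organizational: you invoke Theorem~\ref{MST-thm} as a black box and split into cases according to whether the preferred move is an MP or a bumping MP move, whereas the paper concludes from the fact that each move lies in one of the two merged diagrams and its inverse lies in the other.
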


\begin{proof}
We connect the two diagrams at the top (resp.\ bottom) in
Figure~\ref{Relations among bMP moves} to the left (resp.\ right) diagram in
Figure~\ref{Relations among MP moves}. By an argument similar to that used above,
adding reverse arrows, each pair of boxes in
the same diagram is connected by arrows in both directions.

Observe that each type of MP move and bumping MP move appears in either the left
or the right diagram, and its inverse appears in the other. Therefore, each bumping
MP move can be realized as a sequence consisting of the preferred move or its
inverse, together with pure sliding moves, bumping pure sliding moves, and their inverses.
\end{proof}

\begin{remark}
\label{Bfrom}
Benedetti and Petronio~\cite[Lemma~4.5.1]{BP} showed that each pure sliding move can be obtained as a sequence of the $0$--$2$ move, MP moves, and their inverses.
Similarly, one can prove that each bumping pure sliding move can be obtained as a sequence of the $0$--$2$ move, MP moves, bumping MP moves, and their inverses.
\end{remark}

\subsection{Proof  of Proposition~\ref{P0}}
\label{Proof}

By Theorem~\ref{MST-thm} and Corollary~\ref{bMP}, it suffices to express the CP
move as a sequence of MP moves, bumping MP moves, pure sliding moves, and bumping pure sliding moves, together with their inverses.

The sequence realizing the CP move consists of twenty steps. Instead of presenting
all intermediate diagrams, we describe it using the Gauss code (E-datum) of
normal o-graphs.

Let $\Gamma$ be a normal o-graph with ordered $n$ vertices. By removing the
vertices and joining pairs of opposite edges, one obtains a collection of
oriented circuits. We define the number of components of $\Gamma$
to be the number of such circuits.

Assume first that $\Gamma$ has one component. The E-datum
$\mathcal{E}(\Gamma) = [\mathcal{A}; \gamma]$, where $\mathcal{A}$ is a cyclic
permutation of $\{\pm 1, \pm 2, \ldots, \pm n\}$ and
$\gamma \in \{\pm 1\}^n$ is a sequence of vertex types.
The permutation $\mathcal{A}$ records the order in which the vertices
are encountered along this circuit, where the sign indicates whether
the corresponding branch passes over or under a crossing.

For example, the E-datum $[1, 2, -2, -1; 1, 1]$
corresponds to the normal o-graph shown in Figure~\ref{fig:tref}.
\begin{figure}[H]
    \centering
    \includegraphics[scale=1]{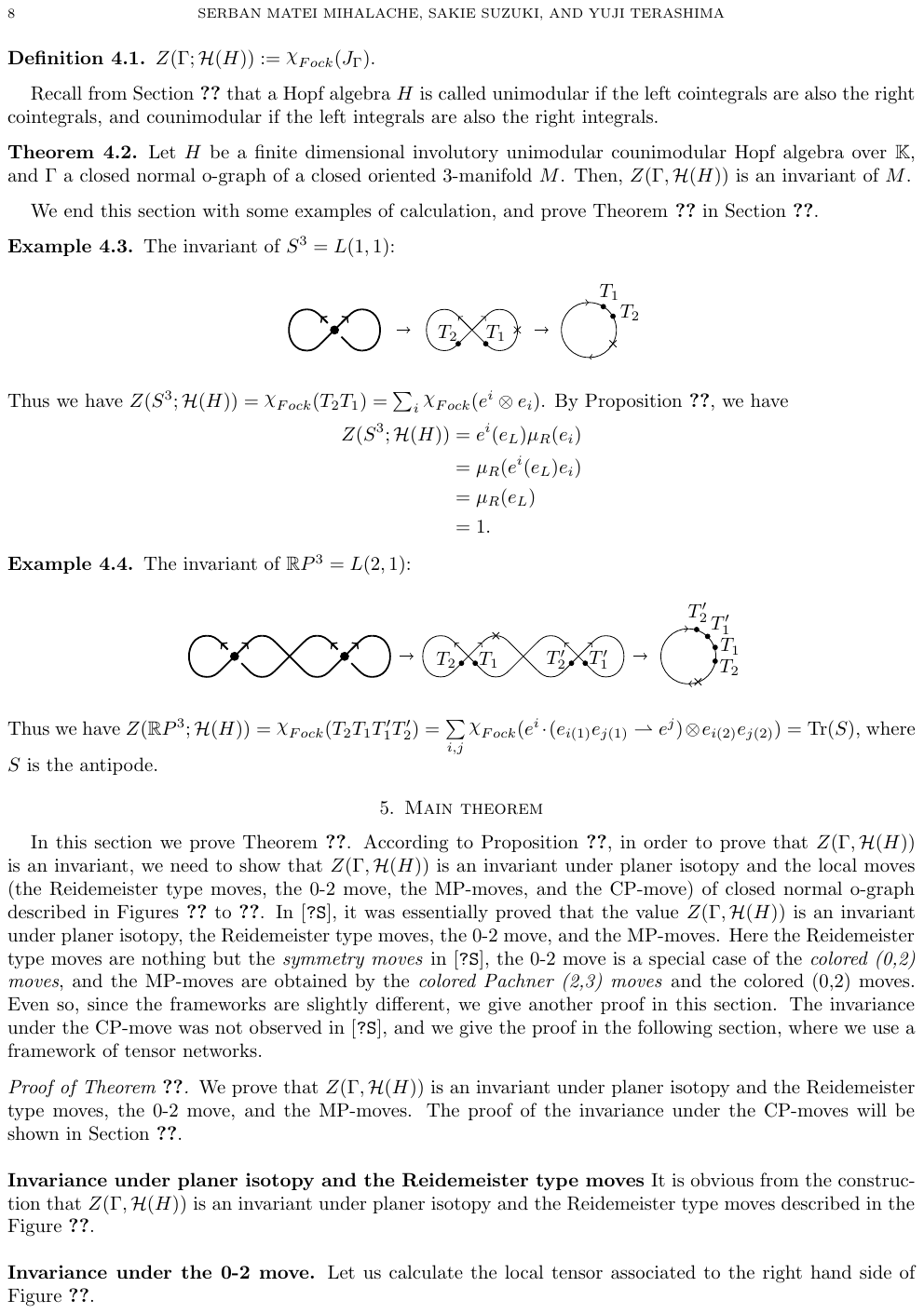}
    
    \begin{picture}(0,0)
    \put(-32,9){$1$}
    \put(27,9){$2$}
    \end{picture}
    
   \caption{Normal o-graph with ordered vertices.}
    \label{fig:tref}
\end{figure}

In general, $\Gamma$ may have several components. In this case, the set
$\{\pm 1, \pm 2, \ldots, \pm n\}$ is partitioned according to the circuits,
and the E-data $\mathcal{E}(\Gamma)$ consists of a cyclic ordering on each part,
together with the sequence $\gamma$.

To describe local moves, we also use E-data for graphs with open edges, whose
endpoints are labeled. For example, the E-data of the left- and right-hand sides
of the CP move in Figure~\ref{fig:CP} are given by

\begin{align*}
 \text{LHS of the CP move}&=[[a, -1,2,-2,3,-3,1, b]; [1, 1, 1]], \\
 \text{RHS of the CP move}&=[[a, -1,-2,5,-3,2,3,4,-4,1,-5, b]; [-1, -1, -1, 1, 1]],
\end{align*}

\noindent
where, in both sides, the right and left endpoints are labeled by $a$ and $b$,
respectively.

We now describe a sequence of moves connecting these two E-data.
Each step is specified by the type of move and the vertices to which it is applied.

\medskip
\noindent\textbf{Reduction of LHS of the CP move.}
\begin{align*}
 \text{LHS of the CP move}&=[[a,-1,2,-2,3,-3,1,b]; [1, 1, 1]] 
\\
&\xrightarrow{\mathrm{C3} [2,1]} [[a, 2,-4,-1,-2,3,-3,4,1,b]; [-1, 1, 1, 1]]
\\
&\xrightarrow{\mathrm{C3} [2,3]} [[a, 5,2,-4,-1,3,-5,-2,-3,4,1, b]; [-1, -1, 1, 1, 1]]
\\
&\xrightarrow{\text{(ps-I)}^{-1} [5,2]} [[a, -3,-1,2,-2,3,1, b]; [-1, 1, 1]]
\\
&\xrightarrow{\mathrm{C1} [1,2]} [[a, -3,2,-4,-1,-2,3,1,4,b]; [1, 1, 1, -1]] 
\\
&\xrightarrow{\text{(ps-III)}^{-1} [1,4]} [[a, -2,1,-1,2, b];[1, 1]]
\\
&\xrightarrow{\mathrm{C3} [1,2]} [[a, -1,-2,3,1,2,-3, b]; [-1, 1, 1]]
\\
&\xrightarrow{\mathrm{E2} [1,2]} [[-1,-2,-4],[1],[a, 2,3,4,-3,b]; [1, -1, 1, -1]]
\\
&\xrightarrow{\mathrm{D1} [3,4]} [[-1,-2,-3,-4],[1],[a, 2,5,4,-5,3,b];
[1, -1, -1, -1, 1]]
\\
&\xrightarrow{\mathrm{B2} [4,1]} [[-2,-3,-6],[1,4],[a, 2,5,-4,6,-1,-5,3, b];
[-1, -1, -1, 1, 1, -1]]
\\
&\xrightarrow{\text{(bps-2)}^{-1} [1,4]} [[-1,-2,-4],[4],[a, 1,3,-3,2, b];
[-1, -1, 1, -1]]
\\
&\xrightarrow{\mathrm{E1}[1,3]} [[-3,-1,-5],[4],[a, 5,1,-2,-4,3,2, b];
[-1, -1, 1, -1, 1]]
\\
&\xrightarrow{\text{(ps-IV)}^{-1} [5,1]} [[-2],[3],[a, -1,-3,2,1, b]; [-1, 1, -1]]
\\
&\xrightarrow{\mathrm{C1} [3,2]} [[-3,-2],[3,4],[a, -1,2,-4,1, b]; [-1, 1, 1, -1]]
\\
&\xrightarrow{\text{(bps-2)}^{-1} [3,4]} [[a,-1,2,-2,1,b]; [-1, 1]]. 
\end{align*}

\noindent\textbf{Reduction of RHS of the CP move.}
\begin{align*}
  \text{RHS of the CP move}&
  =[[a, -1,-2,3,-4,2,4,5,-5,1,-3, b];[-1, -1, 1, -1, 1]]
\\
&\xrightarrow{\mathrm{F1} [4,5]}
[[-5,-4,-6],[4,2,6],[a, -1,-2,3,5,1,-3, b]; [-1, -1, 1, -1, 1, 1]]
\\
&\xrightarrow{\text{(ps-III)}^{-1} [6,4]} 
[[-4],[2],[a, -1,-2,3,4,1,-3,b]; [-1, -1, 1, 1]]
\\
&\xrightarrow{\mathrm{A1} [3,4]}
[[-3,-4],[2],[a, -1,-2,5,1,3,-5,4, b];[-1, -1, 1, -1, 1]]
\\
&\xrightarrow{\text{(bps-1)}^{-1} [4,3]} 
[[-3],[2],[a, -1,-2,3,1,b]; [-1, -1, 1]]
\\
&\xrightarrow{\mathrm{C1} [2,3]}
[[-2,-3],[2,4],[a, -1,3,-4,1, b]; [-1, 1, 1, -1]]
\\
&\xrightarrow{\text{(bps-2)}^{-1} [2,4]}
[[a, -1,2,-2,1, b]; [-1, 1]]
\end{align*}
Thus we have the assertion.


\begin{bibdiv}
\begin{biblist}

\bib{AK}{article}{
   author={Andersen, J\o rgen Ellegaard},
   author={Kashaev, Rinat},
   title={A TQFT from quantum Teichm\"uller theory},
   journal={Comm. Math. Phys.},
   volume={330},
   date={2014},
   number={3},
   pages={887--934},
   issn={0010-3616},
}

\bib{BP}{book}{
   author={Benedetti, Riccardo},
   author={Petronio, Carlo},
   title={Branched standard spines of $3$-manifolds},
   series={Lecture Notes in Mathematics},
   volume={1653},
   publisher={Springer-Verlag, Berlin},
   date={1997},
   pages={viii+132},
   isbn={3-540-62627-1},
}


\bib{BS0}{article}{
   author={Baseilhac, St\'ephane},
   author={Benedetti, Riccardo},
   title={Quantum hyperbolic invariants of $3$-manifolds with ${\rm
   PSL}(2,\Bbb C)$-characters},
   journal={Topology},
   volume={43},
   date={2004},
   number={6},
   pages={1373--1423},
   issn={0040-9383},
}

\bib{BS1}{article}{
   author={Baseilhac, St\'ephane},
   author={Benedetti, Riccardo},
   title={Classical and quantum dilogarithmic invariants of flat ${\rm
   PSL}(2,\Bbb C)$-bundles over $3$-manifolds},
   journal={Geom. Topol.},
   volume={9},
   date={2005},
   pages={493--569},
   issn={1465-3060},
}

\bib{BS2}{article}{
   author={Baseilhac, St\'ephane},
   author={Benedetti, Riccardo},
   title={Non ambiguous structures on $3$-manifolds and quantum symmetry
   defects},
   journal={Quantum Topol.},
   volume={8},
   date={2017},
   number={4},
   pages={749--846},
   issn={1663-487X},
}

\bib{regina}{misc}{
  author={Burton, Benjamin A.},
  author={Budney, Ryan},
  author={Pettersson, William},
  author={and others},
  title={Regina: Software for low-dimensional topology},
  note={\url{http://regina-normal.github.io/}},
  date={1999--2025}
}


\bib{C}{article}{
   author={Costantino, Francesco},
   title={A calculus for branched spines of $3$-manifolds},
   journal={Math. Z.},
   volume={251},
   date={2005},
   number={2},
   pages={427--442},
   issn={0025-5874},
}

\bib{snappy}{misc}{
  author={Culler, Marc},
  author={Dunfield, Nathan},
  author={Goerner, Matthias},
  author={Weeks, Jeffrey},
  title={SnapPy, a computer program for studying the geometry and topology of {$3$}-manifolds},
  note={Available at \url{http://snappy.computop.org}}
}

\bib{Ishii2}{article}{
   author={Ishii, Ippei},
   title={Moves for flow-spines and topological invariants of $3$-manifolds},
   journal={Tokyo J. Math.},
   volume={15},
   date={1992},
   number={2},
   pages={297--312},
   issn={0387-3870},
}



\bib{Mat}{book}{
   author={Matveev, Sergei},
   title={Algorithmic topology and classification of $3$-manifolds},
   series={Algorithms and Computation in Mathematics},
   volume={9},
   edition={2},
   publisher={Springer, Berlin},
   date={2007},
}

\bib{MST1}{article}{
   author={Mihalache, Serban Matei},
   author={Suzuki, Sakie},
   author={Terashima, Yuji},
   title={The Heisenberg double of involutory Hopf algebras and invariants of closed $3$-manifolds},
   journal={Algebr. Geom. Topol.},
   volume={24},
   date={2024},
   number={7},
   pages={3669--3691},
}

\bib{MST2}{article}{
   author={Mihalache, Serban Matei},
   author={Suzuki, Sakie},
   author={Terashima, Yuji},
   title={Quantum invariants of framed $3$-manifolds based on ideal triangulations},
   note={preprint (2022), arXiv:2209.07378},
}

\bib{MST}{article}{
   author={Muramatsu, Kohei},
   author={Suzuki, Sakie},
   author={Taguchi, Koki},
   title={On Matveev-Piergallini moves for branched spines. arXiv: 2405.18743.},
   note={preprint (2021), arXiv:math.GT/2405.18743},
}


\bib{Thurston}{book}{
  author={Thurston, William P.},
  title={The Geometry and Topology of Three-Manifolds},
  note={Princeton University lecture notes, 1978--1981},
  url={http://library.msri.org/books/gt3m/}
}
 
 \bib{TV}{article}{
   author={Turaev, V. G.},
   author={Viro, O. Ya.},
   title={State sum invariants of $3$-manifolds and quantum $6j$-symbols},
   journal={Topology},
   volume={31},
   date={1992},
   number={4},
   pages={865--902},
   issn={0040-9383},
}




\end{biblist}
\end{bibdiv}
\end{document}